\newtheorem{theorem}{Theorem}[section]
\newtheorem{corollary}[theorem]{Corollary}
\newtheorem{definition}[theorem]{Definition}
\newtheorem{lemma}[theorem]{Lemma}
\newtheorem{proposition}[theorem]{Proposition}
\begin{document}

\title{Principal bundle structure of matrix manifolds}
\author{M. Billaud-Friess\thanks{Department of Computer Science and Mathematics, 
GeM, Ecole Centrale de Nantes, 
1 rue de la No\"e, BP 92101, 44321 Nantes Cedex 3, France. Email: [marie.billaud-friess,anthony.nouy]@ec-nantes.fr.} , 
A. Falc\'o\footnotemark[1] \thanks{Departamento de Matem\'aticas, F\'{\i}sica y Ciencias Tecnol\'ogicas, Universidad CEU Cardenal Herrera, San Bartolom\'e 55, 46115 Alfara del Patriarca (Valencia), Spain. E-mail: {afalco@uchceu.es}.} ,   A. Nouy\footnotemark[1]}
\date{}
\maketitle

\begin{abstract}
In this paper, we introduce a new geometric description of the manifolds of matrices of fixed rank.
The starting point is a geometric description of the Grassmann manifold $\mathbb{G}_r(\mathbb{R}^k)$ of linear subspaces of dimension $r<k$ in $\mathbb{R}^k$ which avoids the use of equivalence classes. The set $\mathbb{G}_r(\mathbb{R}^k)$ is equipped with an atlas which provides it with the structure of an analytic manifold modelled on $\mathbb{R}^{(k-r)\times r}$.  
Then we define an atlas for the set  $\mathcal{M}_r(\mathbb{R}^{k \times r})$ of full rank matrices and prove that the resulting manifold is an analytic principal bundle with base  $\mathbb{G}_r(\mathbb{R}^k)$ and typical fibre $\mathrm{GL}_r$, the general linear group of invertible matrices in $\mathbb{R}^{k\times k}$. Finally, we define an atlas for the set $\mathcal{M}_r(\mathbb{R}^{n \times m})$ of non-full rank matrices and prove that the resulting 
manifold is an analytic principal bundle with base  $\mathbb{G}_r(\mathbb{R}^n) \times \mathbb{G}_r(\mathbb{R}^m)$ and typical fibre $\mathrm{GL}_r$. 
The atlas of $\mathcal{M}_r(\mathbb{R}^{n \times m})$ is indexed on the manifold itself, which allows a natural definition of a neighbourhood for a given matrix, this neighbourhood being proved to possess the structure of a Lie group. 
Moreover, the set $\mathcal{M}_r(\mathbb{R}^{n \times m})$ equipped with the topology induced by the atlas is 
proven to be an embedded submanifold of the matrix space $\mathbb{R}^{n \times m}$ 
equipped with the subspace topology. The proposed geometric description then results in a description of the matrix space $\mathbb{R}^{n \times m}$, seen as the union of manifolds $\mathcal{M}_r(\mathbb{R}^{n \times m})$, as an analytic manifold equipped with a topology for which the matrix rank is a continuous map. 
\end{abstract}


{\small \noindent\textbf{Keywords:} Matrix manifolds, Low-rank matrices, Grassmann manifold, Principal bundles.}


\section{Introduction}

Low-rank matrices appear in many applications involving high-dimensional data. Low-rank models are commonly used in statistics, machine learning  or data analysis (see \cite{Zhou2014} for a recent survey).  Also, low-rank approximation of matrices is the cornerstone of many modern numerical methods for high-dimensional problems in computational science, such as model order reduction methods for dynamical systems, or parameter-dependent or stochastic equations \cite{antoulas2001survey,benner2015survey,Nouy2016,morbook2017}.
\par
These applications yield problems of approximation or optimization in the sets of matrices with fixed rank
$$
\mathcal{M}_r(\mathbb{R}^{n\times m})=\{
Z\in \mathbb{R}^{n\times m}: \mathrm{rank}(Z) = r
\}.
$$
A usual geometric approach is to endow the set 
 $\mathcal{M}_r(\mathbb{R}^{n\times m})$ with the structure of a Riemannian manifold \cite{smith1994optimization,AMSb}, which is seen as an embedded submanifold of 
$\mathbb{R}^{n\times m}$ equipped with the topology $\tau_{\mathbb{R}^{n\times m}}$ given by matrix norms. Standard algorithms then work in the ambient matrix space $\mathbb{R}^{n\times m}$ and do not rely on an explicit geometric description of the manifold using local charts (see, e.g., \cite{vandereycken2013low,mishra2013low,mishra2014fixed,koch2007}). However, the matrix rank considered as a map is not continuous for the topology  $\tau_{\mathbb{R}^{n\times m}}$,  which can yield undesirable numerical issues.

The purpose of this paper is to propose a new geometric description of 
 the sets of matrices with fixed rank which is amenable for numerical use, and which relies on the natural 
 parametrization of matrices in $\mathcal{M}_r(\mathbb{R}^{n\times m})$  given by
\begin{align}\label{matrix_representation}
Z = U G V^T,
\end{align}
where $U \in \mathbb{R}^{n \times r}$ and $V \in \mathbb{R}^{m\times r}$ are matrices with full rank $r<\min\{n,m\}$, and
$G \in \mathbb{R}^{r \times r}$ is a non singular matrix. 
The set $\mathcal{M}_r(\mathbb{R}^{n\times m})$ is here endowed with the structure of analytic principal bundle, with an explicit description of local charts.   This results  in a description of the matrix  space $\mathbb{R}^{n\times m}$ as an analytic manifold with a topology induced by local charts which is different from $\tau_{\mathbb{R}^{n\times m}}$ and for which the rank is a continuous map. Note that the representation  \eqref{matrix_representation} of a matrix $Z$ is not unique because
$
Z = (UP)(P^{-1}GP^T)(VP^{-1})^T
$
holds for every invertible matrix $P$ in  $\mathbb{R}^{r \times r}$. An argument used to dodge this undesirable property is the possibility to uniquely define a tangent space (see for example Section 2.1 in \cite{koch2007}), which is a prerequisite for standard algorithms on differentiable manifolds. The geometric description proposed in this paper avoids this undesirable property. Indeed, the system of local charts for the set $\mathcal{M}_r(\mathbb{R}^{n\times m})$ is indexed on the set itself. This allows a natural definition of a neighbourhood for a matrix where all matrices   
admit a unique representation. 

The present work opens the route for new numerical methods for optimization or dynamical low-rank approximation, with algorithms working in local coordinates and avoiding the use of a Riemannian structure, such as in 
\cite{Manton}, where a framework  is introduced
for generalising iterative methods from Euclidean space to manifolds which ensures that local convergence rates are preserved. 
The introduction of a principal bundle representation of matrix manifolds
is also motivated by the importance of this geometric structure in the concept of gauge potential in physics \cite{Michor}. 

We would point out that the 
 proposed geometric description has a natural extension to the case of fixed-rank operators on infinite dimensional spaces
and is consistent with the geometric description of manifolds of tensors with fixed rank 
proposed by Falc\'o, Hackbush and Nouy \cite{falco2016dirac}, in a tensor 
Banach space framework.

Before introducing the main results and outline of the paper, we recall some elements of geometry.

\subsection{Elements of geometry}\label{subsec:elements_geometry}

In this paper, we follow the approach of Serge Lang \cite{Lang} for the definition of a manifold $\mathbb{M}$.
In this framework, a set $\mathbb{M}$ is equipped with an atlas 
which gives $\mathbb{M}$ the structure of a topological space, with a topology induced by local charts, and the structure of differentiable manifold compatible with this topology. 
More precisely, the starting point is the definition of
 a collection of  non-empty subsets $U_\alpha \subset \mathbb{M}$, with $\alpha$ in a set $A$, such that $\{U_\alpha\}_{\alpha \in A}$ is a covering of $\mathbb{M}$. 
The next step is the explicit construction for any $\alpha \in A$ of a local chart $\varphi_\alpha$ which is 
  a bijection  from $U_\alpha$ to an open set $X_\alpha$ of the finite dimensional space $\mathbb{R}^{N_{\alpha}}$ such that 
for any pair $\alpha,\alpha'\in \mathbb{M}$ such that $U_\alpha \cap U_{\alpha'} \neq \emptyset$, the following properties hold: 
 \begin{enumerate}
\item[(i)] $\varphi_\alpha(U_\alpha \cap U_{\alpha'})$ and $\varphi_{\alpha'}(U_\alpha \cap U_{\alpha'})$ are open sets in $X_{\alpha}$ and $X_{\alpha'}$ respectively, and
\item[(ii)] the map 
$$
\varphi_{\alpha'}  \circ \varphi_\alpha ^{-1}: \varphi_\alpha(U_\alpha \cap U_{\alpha'}) \longrightarrow \varphi_{\alpha'} (U_\alpha \cap U_{\alpha'})
$$
is a $\mathcal{C}^p$ differentiable diffeomorphism, with $p \in \mathbb{N}\cup \{\infty\}$ or $p=\omega$ when the map is analytic.
\end{enumerate}
Under the above assumptions, the set $\mathcal{A}:=\{(U_\alpha,\varphi_\alpha): \alpha \in A\}$
is an atlas which endows $\mathbb{M}$ with a structure of $\mathcal{C}^{p}$ manifold. Then
we say that $(\mathbb{M},\mathcal{A})$ is a $\mathcal{C}^{p}$ manifold, or an analytic manifold when $p=\omega$.  A  consequence of the condition $(ii)$ is that when $U_\alpha \cap U_{\alpha'} \neq \emptyset$ holds for $\alpha,\alpha'\in A$, then
$N_\alpha = N_{\alpha'}.$ 
In the particular case where $N_\alpha=N$ for all $\alpha\in A$,  
we say that $(\mathbb{M},\mathcal{A})$ is a $\mathcal{C}^p$ manifold 
modelled on $\mathbb{R}^N.$ Otherwise, we say that it is a manifold
not modelled on a particular finite-dimensional space.  A paradigmatic example is the Grassmann manifold $\mathbb{G}(\mathbb{R}^k)$ of all 
linear subspaces of $\mathbb{R}^k$, such that
$$
\mathbb{G}(\mathbb{R}^k)=\bigcup_{0 \le r \le k}\mathbb{G}_r(\mathbb{R}^k),
$$
where $\mathbb{G}_0(\mathbb{R}^k)= \{0\}$ and  $\mathbb{G}_k(\mathbb{R}^k) =\{\mathbb{R}^k\}$ are trivial manifolds 
and $\mathbb{G}_r(\mathbb{R}^k)$ is a manifold modelled on the linear space $\mathbb{R}^{(k-r)\times r}$
for $0 < r < k.$ In consequence, $\mathbb{G}(\mathbb{R}^k)$ is a manifold not modelled on a
particular finite-dimensional space. 

The atlas also endows $\mathbb{M}$ with a topology given by 
$$
\tau_{\mathcal{A}}:=\left\{
\varphi_\alpha^{-1}(O): \alpha \in A  \text{ and } O \text{ an open set in } X_\alpha
\right\},
$$
which makes $(\mathbb{M},\tau_{\mathcal{A}})$  a topological space where 
each local chart 
$$
\varphi_{\alpha}:(U_\alpha,\tau_{\mathcal{A}}|_{U_\alpha})
\longrightarrow (X_\alpha,\tau_{\mathbb{R}^{N_\alpha}}|_{X_{\alpha}}),
$$
considered as a map between topological spaces, 
is a homeomorphism.\footnote{Here $(\mathfrak{X},\tau)$ denotes a topological space and if $\mathfrak{X}' \subset \mathfrak{X}$, then
$\tau|_{\mathfrak{X}'}$ denotes the subspace topology.
}

\subsection{Main results and outline}

Our first remark is that the matrix space $\mathbb{R}^{n \times m}$ is an analytic manifold modelled on itself and its geometric structure is fully compatible with  the topology $\tau_{\mathbb{R}^{n \times m}}$ induced by a matrix norm. In this paper, we define an atlas on 
$\mathcal{M}_r(\mathbb{R}^{n\times m})$ which gives this set the structure of an 
analytic manifold, with a topology induced by the atlas fully
compatible with the subspace topology 
$\tau_{\mathbb{R}^{n \times m}}|_{\mathcal{M}_r(\mathbb{R}^{n\times m})}$. This implies that
$\mathcal{M}_r(\mathbb{R}^{n\times m})$ is an embedded submanifold of the matrix manifold $\mathbb{R}^{n \times m}$ modelled on itself\footnote{Note that the set $\mathcal{M}_0(\mathbb{R}^{n\times m})=\{0\}$ 
is a trivial manifold, which is trivially embedded in  $\mathbb{R}^{n\times m}.$}. 
For the topology $\tau_{\mathbb{R}^{n \times m}}$, the matrix rank considered as
a map is not continuous but only lower semi-continuous. 
However, if $\mathbb{R}^{n \times m}$ is seen as the disjoint union of sets of matrices with fixed rank, 
 \begin{align}\label{two_faces}
\mathbb{R}^{n\times m} = \bigcup_{0 \le r \le \min\{n,m\}} \mathcal{M}_r(\mathbb{R}^{n\times m}),
\end{align}
then $\mathbb{R}^{n\times m}$ has the structure of an analytic manifold not modelled 
on a particular finite-dimensional space equipped with a topology 
$$
\tau_{\mathbb{R}^{n \times m}}^* = 
\bigcup_{0 \le r \le \min\{n,m\}}\tau_{\mathbb{R}^{n \times m}}|_{\mathcal{M}_r(\mathbb{R}^{n\times m})},
$$
which is not equivalent to $\tau_{\mathbb{R}^{n \times m}}$, and 
for which the matrix rank is a continuous map.
\\\par
Note that in the case when $r=n=m$, the set $\mathcal{M}_n(\mathbb{R}^{n\times n})$ coincides with the general linear group $\mathrm{GL}_n$ of invertible matrices in 
$\mathbb{R}^{n\times n},$ which is an analytic manifold trivially embedded in $\mathbb{R}^{n\times n}$. In all other cases, which are addressed in this paper,
our geometric description of $\mathcal{M}_r(\mathbb{R}^{n\times m})$ relies on a geometric description of the Grassmann manifold $\mathbb{G}_{r}(\mathbb{R}^k)$, with $k=n$ or $m$.
\par
Therefore, we start in Section  \ref{sec:grassmann} by introducing a geometric description of $\mathbb{G}_{r}(\mathbb{R}^k)$.
 A classical approach consists of describing $\mathbb{G}_{r}(\mathbb{R}^k)$  as the quotient manifold $\mathcal{M}_{r}(\mathbb{R}^{k \times r})/\mathrm{GL}_r$ of equivalent classes of full-rank matrices $Z$ in $\mathcal{M}_r(\mathbb{R}^{k\times r})$ having the same column space $\mathrm{col}_{k,r}(Z)$.  Here, we avoid the use of equivalent classes and provide 
an explicit description of an atlas $\mathcal{A}_{k,r}=
\{(\mathfrak{U}_Z,\varphi_{Z})\}_{Z \in \mathcal{M}_{r}(\mathbb{R}^{k \times r})}$  for $\mathbb{G}_{r}(\mathbb{R}^k)$, with local chart
$$
\varphi_Z : \mathfrak{U}_Z \to \mathbb{R}^{(k-r)\times r}  , \quad \varphi_Z^{-1}(X) = \mathrm{col}_{k,r}(Z+Z_\bot X),
$$ 
where $Z_\bot \in \mathbb{R}^{k\times (k-r)}$ is such that $Z_\bot^TZ=0$ and 
$\mathrm{col}_{k,r}(A)$ denotes the column space of a matrix $A \in \mathbb{R}^{k\times r}$, and we prove that the neighbourhood  $ \mathfrak{U}_Z$   have the structure of a Lie group.  This parametrization of the Grassmann manifold is introduced in \cite[Section 2]{AMS} but the authors do not elaborate on it. 

Then in Section \ref{sec:fullrank}, we consider the particular case of full-rank matrices. We introduce an atlas $\mathcal{B}_{k,r} = \{(\mathcal{V}_Z,\xi_{Z})\}_{Z \in \mathcal{M}_r(\mathbb{R}^{k\times r})}$ 
for the  manifold $\mathcal{M}_{r}(\mathbb{R}^{k \times r})$ of matrices with full rank $r<k$, with local chart
$$
\xi_Z : \mathcal{V}_Z\to \mathbb{R}^{ (k-r)\times r} \times \mathrm{GL}_r , \quad 
\xi_Z^{-1}(X,G) = (Z+Z_\bot X)G, 
$$
and  prove that  $\mathcal{M}_{r}(\mathbb{R}^{k \times r})$ is an analytic principal bundle with base $\mathbb{G}_{r}(\mathbb{R}^{k})$ and typical fibre $\mathrm{GL}_r$. Moreover, we prove that $\mathcal{M}_{r}(\mathbb{R}^{k \times r})$ is an embedded submanifold of $(\mathbb{R}^{k \times r},\tau^*_{\mathbb{R}^{k\times r}})$, and that each of the neighbourhoods $\mathcal{V}_Z$  have the structure of a Lie group.

Finally, in Section \ref{sec:fixed-rank}, we provide an analytic atlas $\mathcal{B}_{n,m,r} = \{(\mathcal{U}_Z,\theta_{Z})\}_{Z \in \mathcal{M}_r(\mathbb{R}^{n\times m})}$  for the set  $\mathcal{M}_{r}(\mathbb{R}^{n\times m})$
of matrices $Z=UGV^T$ with rank $r< \min\{n,m\}$, with local chart 
$$
\theta_{Z} :\mathcal{U}_Z\to \mathbb{R}^{(n-r) \times r} \times \mathbb{R}^{(m-r) \times r}  \times \mathrm{GL}_r , \quad 
\theta_{Z}^{-1}(X,Y,H) = (U+U_\bot X) H(V+V_\bot Y), 
$$
and we prove that $\mathcal{M}_{r}(\mathbb{R}^{n\times m})$ is {an analytic} principal bundle with base $\mathbb{G}_{r}(\mathbb{R}^{n}) \times \mathbb{G}_{r}(\mathbb{R}^{m})$ and typical fibre $\mathrm{GL}_r$. Then we prove that $\mathcal{M}_r(\mathbb{R}^{n\times m})$ is an embedded submanifold of $(\mathbb{R}^{n \times m},\tau^*_{\mathbb{R}^{n\times m}})$, and that each of the neighbourhoods $\mathcal{U}_Z$  have the structure of a Lie group.


\section{The Grassmann manifold $\mathbb{G}_r(\mathbb{R}^k)$}\label{sec:grassmann}

In this section, we present a geometric description of the Grassmann manifold $\mathbb{G}_r(\mathbb{R}^k)$ of all subspaces of dimension $r$ in $\mathbb{R}^k$, $0< r<k$, 
$$
\mathbb{G}_r(\mathbb{R}^k)=\{\mathcal{V} \subset \mathbb{R}^k: \mathcal{V} \text{ is a linear subspace  with } \dim (\mathcal{V}) = r\},
$$
with an explicit description of local charts. 
We first introduce the surjective map 
$$
\mathrm{col}_{k,r}:\mathcal{M}_{r}(\mathbb{R}^{k\times r}) \longrightarrow \mathbb{G}_r(\mathbb{R}^k),
\quad Z \mapsto \mathrm{col}_{k,r}\,(Z),
$$
where $\mathrm{col}_{k,r}(Z)$ is the {column space} of the matrix $Z$, which is the subspace spanned by the column vectors of $Z.$  
Given $\mathcal{V} \in \mathbb{G}_r(\mathbb{R}^k),$ there are infinitely many matrices $Z$ such that 
$ \mathrm{col}_{k,r}(Z) = \mathcal{V} $.  
Given a matrix $Z \in \mathcal{M}_r(\mathbb{R}^{k\times r})$, the set of matrices in $\mathcal{M}_r(\mathbb{R}^{k\times r})$ having the same column space as $Z$ is
$$
Z\mathrm{GL}_r:=\{ZG: G \in \mathrm{GL}_r\}.
$$

\subsection{An atlas for $\mathbb{G}_r(\mathbb{R}^k)$}

For a given matrix $Z$ in $\mathcal{M}_r(\mathbb{R}^{k\times r})$, we let $Z_{\bot} \in \mathcal{M}_{k-r}(\mathbb{R}^{k\times (k-r)})$ be a matrix
 such that $Z^TZ_{\bot}=0$ and we introduce an \emph{affine cross section}
\begin{align}
\mathcal{S}_{Z}:=\{W \in \mathcal{M}_r(\mathbb{R}^{k\times r}): Z^TW = Z^TZ\}, \label{SZ_affine}
\end{align}
which has the following equivalent characterization. 

\begin{lemma}\label{lemma:SZ}
The affine cross section $\mathcal{S}_{Z}$ is characterized by
\begin{align}
\mathcal{S}_{Z}=\{Z+Z_{\bot}X: X \in  \mathbb{R}^{(k-r) \times r}\}, \label{SZ}\end{align}
and the map
$$\eta_{Z}:\mathbb{R}^{(k-r) \times r} \longrightarrow \mathcal{S}_{Z}, \quad X \mapsto Z+Z_{\bot}X$$
is bijective.
\end{lemma}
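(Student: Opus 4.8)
The plan is to prove the set equality \eqref{SZ} by double inclusion, and then to establish bijectivity of $\eta_Z$ directly from the characterization.

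First I would show that $\{Z+Z_\bot X : X \in \mathbb{R}^{(k-r)\times r}\} \subseteq \mathcal{S}_Z$. Given $X$, set $W = Z + Z_\bot X$. Then $Z^T W = Z^T Z + (Z^T Z_\bot) X = Z^T Z + 0 = Z^T Z$, using the defining property $Z^T Z_\bot = 0$. It remains to check that $W$ actually lies in $\mathcal{M}_r(\mathbb{R}^{k\times r})$, i.e. that $W$ has full rank $r$; this follows because $Z^T W = Z^T Z$ is invertible (as $Z$ has full column rank, $Z^T Z \in \mathrm{GL}_r$), and a matrix $W$ admitting a left factor $A$ with $AW$ invertible must itself have rank $r$. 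So the right-hand side of \eqref{SZ} is contained in $\mathcal{S}_Z$.

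For the reverse inclusion, let $W \in \mathcal{S}_Z$, so $Z^T W = Z^T Z$, equivalently $Z^T(W - Z) = 0$. This says every column of $W - Z$ lies in the kernel of $Z^T$, i.e. in the orthogonal complement of $\mathrm{col}_{k,r}(Z)$, which is an $(k-r)$-dimensional subspace. Since the columns of $Z_\bot$ form a basis of that subspace (here I would note $Z_\bot \in \mathcal{M}_{k-r}(\mathbb{R}^{k\times(k-r)})$ has full rank $k-r$ and $\mathrm{col}(Z_\bot) \subseteq \ker Z^T$, and dimensions match so equality holds), each column of $W - Z$ is a unique linear combination of the columns of $Z_\bot$; collecting the coefficient vectors gives a unique $X \in \mathbb{R}^{(k-r)\times r}$ with $W - Z = Z_\bot X$, hence $W = Z + Z_\bot X$. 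This simultaneously proves surjectivity of $\eta_Z$ and, via the uniqueness of $X$ (equivalently, injectivity of $X \mapsto Z_\bot X$, which holds because $Z_\bot$ has full column rank), its injectivity. Thus $\eta_Z$ is a bijection onto $\mathcal{S}_Z$ as characterized by \eqref{SZ}.

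The argument is essentially routine linear algebra; the only point requiring a little care is verifying that membership in the affine set $\{Z + Z_\bot X\}$ does not take us outside $\mathcal{M}_r(\mathbb{R}^{k\times r})$ — that is, that every such $W$ genuinely has rank $r$ — which I would handle via the invertibility of $Z^T W = Z^T Z$ as indicated above. Everything else is dimension counting and the full-rank property of $Z_\bot$.
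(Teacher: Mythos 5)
Your proof is correct and follows essentially the same route as the paper's: the forward inclusion via $Z^T Z_\bot = 0$, the reverse inclusion via $W-Z$ lying in $\mathrm{col}_{k,r}(Z)^\bot = \mathrm{col}_{k,k-r}(Z_\bot)$, and injectivity from the full column rank of $Z_\bot$. The only difference is that you explicitly verify that $Z+Z_\bot X$ has full rank (a point the paper leaves implicit), which is a welcome bit of extra care.
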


\begin{proof}
We first observe that  
$
Z^T(Z+Z_{\bot}X_Z)= Z^TZ
$ for all $X \in  \mathbb{R}^{(k-r) \times r}$, 
which implies that $\{Z+Z_{\bot}X: X \in  \mathbb{R}^{(k-r) \times r}\} \subset \mathcal{S}_{Z}.$ For the other inclusion, we observe that if $W \in \mathcal{S}_{Z}$, then $Z^TW=Z^TZ$ and hence $W-Z \in 
\mathrm{col}_{k,r}(Z)^{\bot} $, the orthogonal subspace to $\mathrm{col}_{k,r}(Z)$ in $\mathbb{R}^k$.
Since $\mathrm{col}_{k,r}(Z)^{\bot} =  \mathrm{col}_{k,k-r}\,(Z_{\bot}),$ there exists
$X \in \mathbb{R}^{(k-r)\times r}$ such that
$W-Z = Z_{\bot}X.$  Proving that $\eta_{Z}$ is bijective is straightforward.
\end{proof}

\begin{proposition}\label{geometryU}
For  each $W  \in \mathcal{M}_r(\mathbb{R}^{k\times r})$ such that $\det(Z^TW) \neq 0$, there exists a unique $G_W \in \mathrm{GL}_r$ such that
$$
W\mathrm{GL}_r \cap \mathcal{S}_{Z} = \{WG_W^{-1}\}
$$ 
holds, which means that  the set of matrices with the same column space as $W$ intersects $\mathcal{S}_{Z}$ 
at the single point $WG_W^{-1}.$ 
Furthermore, $G_W=id_{r}$ 
if and only if $W \in \mathcal{S}_{Z}.$
\end{proposition}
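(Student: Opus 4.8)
The plan is to exhibit the matrix $G_W$ explicitly and then verify uniqueness and the intersection property. Given $W \in \mathcal{M}_r(\mathbb{R}^{k\times r})$ with $\det(Z^TW)\neq 0$, I would set $G_W := (Z^TZ)^{-1}(Z^TW)$, which is well-defined and invertible since both $Z^TZ$ and $Z^TW$ are invertible $r\times r$ matrices (the former because $Z$ has full rank $r$, so $Z^TZ$ is symmetric positive definite; the latter by hypothesis). First I would check that $WG_W^{-1} \in \mathcal{S}_Z$: by definition of $\mathcal{S}_Z$ in \eqref{SZ_affine}, this amounts to verifying $Z^T(WG_W^{-1}) = Z^TZ$, i.e. $(Z^TW)G_W^{-1} = Z^TZ$, which is immediate from the choice of $G_W$. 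Since $WG_W^{-1} \in W\mathrm{GL}_r$ as well, this shows $WG_W^{-1} \in W\mathrm{GL}_r \cap \mathcal{S}_Z$, so the intersection is non-empty.

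Next I would establish uniqueness, i.e. that the intersection contains exactly one point. Suppose $WG^{-1} \in \mathcal{S}_Z$ for some $G \in \mathrm{GL}_r$. Then $Z^T W G^{-1} = Z^T Z$, hence $Z^T W = (Z^T Z) G$, so $G = (Z^TZ)^{-1}(Z^TW) = G_W$. Therefore $WG^{-1} = WG_W^{-1}$ is the only element of $W\mathrm{GL}_r \cap \mathcal{S}_Z$, which proves both the uniqueness of $G_W$ and that the intersection is the singleton $\{WG_W^{-1}\}$.

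Finally, for the last assertion: if $G_W = id_r$, then $WG_W^{-1} = W$, and since $WG_W^{-1} \in \mathcal{S}_Z$ we get $W \in \mathcal{S}_Z$. Conversely, if $W \in \mathcal{S}_Z$, then $Z^TW = Z^TZ$ by \eqref{SZ_affine}, so $G_W = (Z^TZ)^{-1}(Z^TW) = (Z^TZ)^{-1}(Z^TZ) = id_r$. This is entirely routine; there is no real obstacle in this proposition, since everything reduces to the single identity $Z^T W = (Z^TZ) G_W$ together with invertibility of $Z^TZ$. The only point requiring a moment's care is recording why $Z^TZ$ is invertible, namely that $Z \in \mathcal{M}_r(\mathbb{R}^{k\times r})$ has full column rank $r$.
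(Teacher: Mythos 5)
Your proof is correct and follows essentially the same route as the paper: both identify $G_W = (Z^TZ)^{-1}(Z^TW)$ explicitly and derive uniqueness from the defining relation $Z^TWG_W^{-1}=Z^TZ$. If anything, you are slightly more complete, since you explicitly verify that $WG_W^{-1}$ does lie in $\mathcal{S}_Z$ (non-emptiness of the intersection), a step the paper's proof leaves implicit.
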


\begin{proof}
By Lemma \ref{lemma:SZ}, a matrix $A \in W\mathrm{GL}_r \cap \mathcal{S}_{Z} $ is such that 
$A = WG_W^{-1} = Z+Z_{\bot}X $ for a certain $G_W \in \mathrm{GL}_r$ and a certain $X \in \mathbb{R}^{(k-r)\times r}$. Then $Z^TWG_W^{-1}=Z^TZ$ and $G_W$ is uniquely defined by $G_W=(Z^TZ)^{-1}(Z^TW),$ which proves that $W\mathrm{GL}_r \cap \mathcal{S}_{Z}$ is the singleton $\{WG_W^{-1}\},$ and 
$G_W =  id_{r}$ 
if and only if $W \in \mathcal{S}_{Z}.$
\end{proof}

\begin{corollary}\label{colbijective}
For each $Z \in \mathcal{M}_r(\mathbb{R}^{k\times r})$, 
 the map $\mathrm{col}_{k,r}:\mathcal{S}_{Z} \longrightarrow \mathbb{G}_r(\mathbb{R}^k)$
is injective.
\end{corollary}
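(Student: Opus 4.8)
The plan is to prove injectivity of $\mathrm{col}_{k,r}$ restricted to $\mathcal{S}_{Z}$ by combining Lemma~\ref{lemma:SZ} with Proposition~\ref{geometryU}. Suppose $W_1, W_2 \in \mathcal{S}_{Z}$ satisfy $\mathrm{col}_{k,r}(W_1) = \mathrm{col}_{k,r}(W_2)$. Since matrices in $\mathcal{M}_r(\mathbb{R}^{k\times r})$ with the same column space differ by right multiplication by an element of $\mathrm{GL}_r$ (as recalled before the subsection, $W_1\mathrm{GL}_r = W_2\mathrm{GL}_r$), there exists $P \in \mathrm{GL}_r$ with $W_2 = W_1 P$. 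The goal is then to force $P = id_r$, which gives $W_1 = W_2$.

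First I would observe that because $W_1 \in \mathcal{S}_{Z}$, we have $Z^T W_1 = Z^T Z$, which is invertible since $Z$ has full rank; in particular $\det(Z^T W_1) \neq 0$, so $W_1$ lies in the domain of applicability of Proposition~\ref{geometryU}. Applying that proposition to $W = W_1$, the unique element of $W_1\mathrm{GL}_r \cap \mathcal{S}_{Z}$ is $W_1 G_{W_1}^{-1}$ with $G_{W_1} = (Z^TZ)^{-1}(Z^T W_1) = (Z^TZ)^{-1}(Z^TZ) = id_r$, so $W_1$ itself is that unique intersection point. But $W_2 = W_1 P \in W_1\mathrm{GL}_r$ and $W_2 \in \mathcal{S}_{Z}$, so $W_2$ also lies in $W_1\mathrm{GL}_r \cap \mathcal{S}_{Z}$. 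By uniqueness, $W_2 = W_1$, hence $P = id_r$ and injectivity follows.

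Alternatively, and perhaps more transparently, I would argue directly: writing $W_i = Z + Z_\bot X_i$ via Lemma~\ref{lemma:SZ} and using $W_2 = W_1 P$, multiply on the left by $Z^T$ to get $Z^T Z = Z^T W_2 = Z^T W_1 P = (Z^T Z) P$, and cancel the invertible factor $Z^TZ$ to conclude $P = id_r$. This is essentially the same computation repackaged and avoids invoking the full strength of Proposition~\ref{geometryU}. I do not anticipate a genuine obstacle here — the only point requiring a moment's care is justifying that $\mathrm{col}_{k,r}(W_1) = \mathrm{col}_{k,r}(W_2)$ implies $W_2 = W_1 P$ for some $P \in \mathrm{GL}_r$, which is exactly the standard fact that two full-rank $k \times r$ matrices span the same $r$-dimensional subspace if and only if they are related by an invertible change of basis on the right, already recorded in the text as the description of $Z\mathrm{GL}_r$.
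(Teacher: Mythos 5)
Your proof is correct and follows essentially the same route as the paper: the paper's own (very terse) argument also reduces injectivity on $\mathcal{S}_Z$ to the uniqueness statement of Proposition~\ref{geometryU}, exactly as in your first paragraph. Your added details (checking $\det(Z^TW_1)\neq 0$, identifying $G_{W_1}=id_r$, and the direct computation $Z^TZ=(Z^TZ)P$) merely make explicit what the paper leaves implicit.
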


\begin{proof} 
Let us assume the existence of $W,\tilde W\in \mathcal{S}_{Z}$ 
such that $\mathrm{col}_{k,r}(W)=\mathrm{col}_{k,r}(\tilde W).$ Then $W=\tilde W$ by Proposition \ref{geometryU}.
\end{proof}

Lemma \ref{lemma:SZ} and Corollary \ref{colbijective} allow us 
to construct a system of local charts for $\mathbb{G}_r(\mathbb{R}^k)$ by 
defining for each $Z \in \mathcal{M}_r(\mathbb{R}^{k\times r})$ a neighbourhood of $\mathrm{col}_{k,r}(Z)$ by
$$
\mathfrak{U}_{Z}:= \mathrm{col}_{k,r}(\mathcal{S}_{Z}) = 
\{ \mathrm{col}_{k,r}\,(W): W\in \mathcal{S}_{Z}\}
$$
together with the bijective map
$$\varphi_Z:= (\mathrm{col}_{k,r} \circ \eta_{Z})^{-1} : \mathfrak{U}_{Z} \to \mathbb{R}^{(k-r) \times r}$$
such that 
$$
\varphi_{Z}^{-1}(X) = \mathrm{col}_{k,r}(Z+Z_{\bot}X)
$$
for $X\in  \mathbb{R}^{(k-r)\times r}$.
We denote by $Z^+$ the Moore-Penrose pseudo-inverse of the full rank matrix $Z\in \mathcal{M}_{r}(\mathbb{R}^{r\times k})$, defined by 
$$
Z^{+}:= (Z^TZ)^{-1}Z^T \in \mathcal{M}_{r}(\mathbb{R}^{r\times k}).
$$ 
It satisfies $Z^+Z = id_{r}$ and $Z^+Z_\bot=0$. Moreover, $ZZ^+ \in \mathbb{R}^{k\times k}$ is the projection onto $\mathrm{col}_{k,r}(Z)$ parallel to $\mathrm{col}_{k,r}(Z)^\bot$.
Finally, we have the following result.

\begin{theorem}\label{grassmann}
The collection $\mathcal{A}_{k,r}:=\{(\mathfrak{U}_{Z},\varphi_{Z}):Z \in \mathcal{M}_{r}(\mathbb{R}^{k\times r})\}$ is
an analytic atlas for $\mathbb{G}_r(\mathbb{R}^k)$ and hence $(\mathbb{G}_r(\mathbb{R}^k),\mathcal{A}_{k,r})$ 
is an analytic $r(k-r)$-dimensional manifold modelled on $\mathbb{R}^{(k-r) \times r}$.
\end{theorem}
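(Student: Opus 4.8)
The plan is to verify the two atlas axioms (i) and (ii) from the introduction for the collection $\mathcal{A}_{k,r}$, namely that the charts are bijections onto the model space $\mathbb{R}^{(k-r)\times r}$ (already established), that the images of chart overlaps are open, and that the transition maps are analytic diffeomorphisms. The first substantive task is to show that $\{\mathfrak{U}_Z\}_{Z\in\mathcal{M}_r(\mathbb{R}^{k\times r})}$ actually covers $\mathbb{G}_r(\mathbb{R}^k)$: given any subspace $\mathcal{V}$, pick any $W$ with $\mathrm{col}_{k,r}(W)=\mathcal{V}$; then $\det(W^TW)\neq 0$, so by Proposition \ref{geometryU} (applied with $Z=W$), $\mathcal{V}=\mathrm{col}_{k,r}(W)\in\mathfrak{U}_W$. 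Actually even more cheaply, $\mathcal{V}=\mathrm{col}_{k,r}(Z)\in\mathfrak{U}_Z$ since $Z\in\mathcal{S}_Z$ (take $X=0$). So the covering property is immediate.

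Next I would compute the overlap condition explicitly. Fix $Z,Z'\in\mathcal{M}_r(\mathbb{R}^{k\times r})$ with $\mathfrak{U}_Z\cap\mathfrak{U}_{Z'}\neq\emptyset$. A subspace $\mathcal{V}$ lies in $\mathfrak{U}_Z$ iff it has a (unique) representative in $\mathcal{S}_Z$, which by Proposition \ref{geometryU} happens iff for some — equivalently any — matrix $W$ with $\mathrm{col}_{k,r}(W)=\mathcal{V}$ one has $\det(Z^TW)\neq 0$. Working in the chart $\varphi_Z$, a point $X\in\mathbb{R}^{(k-r)\times r}$ corresponds to $\mathcal{V}=\mathrm{col}_{k,r}(Z+Z_\bot X)$, and this lies in $\mathfrak{U}_{Z'}$ iff $\det\bigl((Z')^T(Z+Z_\bot X)\bigr)\neq 0$. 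Since $X\mapsto\det\bigl((Z')^T(Z+Z_\bot X)\bigr)$ is a polynomial (hence analytic) real-valued function on $\mathbb{R}^{(k-r)\times r}$, the set $\varphi_Z(\mathfrak{U}_Z\cap\mathfrak{U}_{Z'})$ is the complement of its zero set, hence open; this gives axiom (i). For the transition map, on this overlap the unique representative of $\mathcal{V}$ in $\mathcal{S}_{Z'}$ is, by the proof of Proposition \ref{geometryU}, the matrix $(Z+Z_\bot X)G_W^{-1}$ with $G_W=((Z')^TZ')^{-1}(Z')^T(Z+Z_\bot X)$; applying $\eta_{Z'}^{-1}$, i.e. left-multiplying by $(Z'_\bot)^+$ after subtracting $Z'$ (using $(Z')^+(Z+Z_\bot X)G_W^{-1}=id_r$), yields
\begin{equation*}
\varphi_{Z'}\circ\varphi_Z^{-1}(X)=(Z'_\bot)^+(Z+Z_\bot X)\bigl((Z')^T(Z+Z_\bot X)\bigr)^{-1}((Z')^TZ').
\end{equation*}
This is a composition of affine maps in $X$, matrix multiplication, and matrix inversion, all of which are analytic on the domain where the inverted matrix is invertible; hence the transition map is analytic, and by symmetry so is its inverse, giving axiom (ii). Finally, whenever overlaps are nonempty the model space is $\mathbb{R}^{(k-r)\times r}$ throughout, so $\mathbb{G}_r(\mathbb{R}^k)$ is modelled on $\mathbb{R}^{(k-r)\times r}$ and has dimension $r(k-r)$.

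The main obstacle, such as it is, is bookkeeping in the transition-map computation: correctly identifying the unique $\mathcal{S}_{Z'}$-representative of a subspace given in the $Z$-chart and then inverting the parametrization $\eta_{Z'}$ to read off the $X'$-coordinate cleanly. One must be careful that $Z_\bot$ (resp. $Z'_\bot$) is only specified up to right multiplication by an element of $\mathrm{GL}_{k-r}$, but this ambiguity is harmless since a different choice merely composes the chart with an analytic linear automorphism of $\mathbb{R}^{(k-r)\times r}$; alternatively one fixes a choice of $Z_\bot$ per $Z$ once and for all. No deep idea is needed beyond the algebraic content already packaged in Lemma \ref{lemma:SZ}, Proposition \ref{geometryU}, and Corollary \ref{colbijective}; the result is essentially a formal consequence of those, together with the observation that determinants and matrix inverses are analytic.
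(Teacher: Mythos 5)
Your proof is correct and follows the same overall strategy as the paper's: the covering is immediate, and the transition map is computed explicitly via the pseudo-inverse; the paper's formula $\varphi_{\tilde Z}\circ\varphi_Z^{-1}(X)=\tilde Z_\bot^+(Z+Z_\bot X)G^{-1}$ with $G=\tilde Z^+(Z+Z_\bot X)$ coincides with yours once the pseudo-inverses are expanded. Where you genuinely diverge --- and in fact improve on the paper --- is in the openness of the chart overlaps. The paper rewrites any $\mathcal{V}=\mathrm{col}_{k,r}(Z+Z_\bot X)\in\mathfrak{U}_Z$ as $\mathrm{col}_{k,r}((\tilde Z+\tilde Z_\bot\tilde X)G)$ and concludes $\mathfrak{U}_Z=\mathfrak{U}_Z\cap\mathfrak{U}_{\tilde Z}$, so that $\varphi_Z(\mathfrak{U}_Z\cap\mathfrak{U}_{\tilde Z})$ is all of $\mathbb{R}^{(k-r)\times r}$. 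That step silently assumes $G=\tilde Z^+(Z+Z_\bot X)$ is invertible for \emph{every} $X$, which is false in general: for $k=2$, $r=1$, $Z=(1,0)^T$, $\tilde Z=(0,1)^T$, the $x$-axis lies in $\mathfrak{U}_Z$ but not in $\mathfrak{U}_{\tilde Z}$, and there $G=0$. Your characterization of $\varphi_Z(\mathfrak{U}_Z\cap\mathfrak{U}_{\tilde Z})$ as the complement of the zero set of the polynomial $X\mapsto\det\bigl(\tilde Z^T(Z+Z_\bot X)\bigr)$ is the correct argument, and the transition map is then analytic precisely on this open set, with the inverse handled by symmetry. Your closing remark on the $\mathrm{GL}_{k-r}$-ambiguity in the choice of $Z_\bot$ addresses a point the paper leaves implicit; fixing one $Z_\bot$ per $Z$, as you suggest, is all that is needed.
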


\begin{proof}
Clearly $\{\mathfrak{U}_{Z}\}_{Z \in \mathcal{M}_{r}(\mathbb{R}^{k\times r})}$ is a covering of $\mathbb{G}_r(\mathbb{R}^k).$ Now let 
$Z$ and $\tilde Z$ be such that $\mathfrak{U}_{Z} \cap \mathfrak{U}_{\tilde Z} \neq \emptyset$. 
Let $\mathcal{V} \in \mathfrak{U}_{Z}$ such that $\mathcal{V} = \varphi_Z^{-1}(X)= \mathrm{col}_{k,r}(Z+Z_\bot X)$, with $X\in \mathbb{R}^{k\times (k-r)}$. We can write $Z+Z_\bot X  = (\tilde Z + \tilde Z_\bot \tilde X)G$ with $G = \tilde Z^+(Z+Z_\bot X)$ and $\tilde X =  \tilde Z_\bot^+ (Z+Z_\bot X )G^{-1}$. Therefore,  $\mathcal{V} =  \mathrm{col}_{k,r}((\tilde Z + \tilde Z_\bot \tilde X)G) = \mathrm{col}_{k,r}(\tilde Z + \tilde Z_\bot \tilde X) = \varphi_{\tilde Z}^{-1}({\tilde X}) \in \mathfrak{U}_{\tilde Z}$, which implies that 
  $\mathfrak{U}_{Z} = \mathfrak{U}_{Z} \cap  \mathfrak{U}_{\tilde Z}$. Therefore, $\varphi_Z(\mathfrak{U}_{Z} \cap  \mathfrak{U}_{\tilde Z}) = \varphi_Z(\mathfrak{U}_{Z}) = \mathbb{R}^{k\times (n-k)}$ is an open set. In the same way, we show that $\mathfrak{U}_{\tilde Z} = \mathfrak{U}_{Z} \cap  \mathfrak{U}_{\tilde Z}$  and $ \varphi_{\tilde Z}(\mathfrak{U}_{Z}) = \mathbb{R}^{k\times (n-k)}$ is an open set. Finally, the map $\varphi_{\tilde Z} \circ \varphi_Z^{-1}$ from $\mathbb{R}^{(k-r) \times r}$ to $\mathbb{R}^{(k-r) \times r}$  
is given by $\varphi_{\tilde Z} \circ \varphi_{Z}^{-1}(X) =  \tilde Z_{\bot}^{+}(Z+Z_{\bot} X)G^{-1}
$, with $G = \tilde Z^+ (Z + Z_\bot X_Z) $, which is clearly an analytic map. 
\end{proof}

\subsection{Lie group structure of neighbourhoods $\mathfrak{U}_Z$} \label{sec:liegroupSZ}

Here we prove that each neighbourhood $\mathfrak{U}_Z$ of $\mathbb{G}_r(\mathbb{R}^k)$ is a Lie group.  
For that, we first note that  
a neighbourhood $\mathfrak{U}_Z$ of $\mathbb{G}_r(\mathbb{R}^k)$ can be identified with the set $\mathcal{S}_{Z}$ through the application $\mathrm{col}_{k,r} : \mathcal{S}_{Z} \to \mathfrak{U}_Z$. The next step is to 
identify $\mathcal{S}_{Z}$ with 
a closed Lie subgroup of $\mathrm{GL}_k,$ denoted by $\mathcal{G}_{Z},$ with associated Lie algebra $\mathfrak{g}_Z$ isomorphic to $\mathbb{R}^{r \times (k-r)}$, and such that the exponential map\footnote{We recall that the matrix exponential $\exp:\mathbb{R}^{k\times k} \rightarrow \mathrm{GL}_k$
is defined by 
$
\exp(A)=\sum_{n=0}^{\infty} \frac{A^n}{n!}.
$
} $\exp: \mathfrak{g}_Z \longrightarrow \mathcal{G}_{Z}$ is a diffeomorphism. To this end, for a given
$Z \in \mathcal{M}_r(\mathbb{R}^{k \times r})$, we introduce the vector space
\begin{align}
\mathfrak{g}_{Z}:=
\{Z_{\bot}XZ^+: X \in  \mathbb{R}^{(k-r)\times r}\} \subset \mathbb{R}^{k \times k}. \label{hZ}
\end{align} 
The following proposition proves that $\mathfrak{g}_{Z}$ is a
commutative subalgebra of $\mathbb{R}^{k \times k}.$ 

\begin{proposition}\label{algebra}
For all $X, \tilde X\in  \mathbb{R}^{(k-r)\times r}$,   
$$(Z_{\bot}XZ^+)(Z_{\bot}\tilde X Z^+)=0$$ 
holds, and $\mathfrak{g}_{Z}$ is a
commutative subalgebra of $\mathbb{R}^{k \times k}.$ Moreover, 
\begin{align}\label{exp}
\exp(Z_{\bot}XZ^+)=id_{k} + Z_{\bot}XZ^+,
\end{align}
\begin{align}\label{exp1}
\exp(Z_{\bot}XZ^+)Z = Z+Z_{\bot}X,
\end{align}
and
\begin{align}\label{exp11}
\exp(Z_{\bot}XZ^+)Z_{\bot} = Z_{\bot}
\end{align}
hold for all $X \in \mathbb{R}^{(k-r)\times r}.$ 
\end{proposition}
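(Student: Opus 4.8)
The plan is to exploit the single identity $Z^+ Z_\bot = 0$, which is recalled just before the statement (it follows from $Z^+ = (Z^TZ)^{-1}Z^T$ and $Z^T Z_\bot = 0$). First I would compute the product directly: for $X,\tilde X \in \mathbb{R}^{(k-r)\times r}$,
\[
(Z_\bot X Z^+)(Z_\bot \tilde X Z^+) = Z_\bot X (Z^+ Z_\bot) \tilde X Z^+ = Z_\bot X\, 0\, \tilde X Z^+ = 0 .
\]
This shows every product of two elements of $\mathfrak{g}_Z$ vanishes; in particular $\mathfrak{g}_Z$ is closed under the Lie bracket (the bracket of any two elements is $0$), so it is a commutative subalgebra of $\mathbb{R}^{k\times k}$, and moreover it is a nilpotent algebra in which every element squares to zero.

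Next, the identity \eqref{exp} is immediate from the nilpotency just established: since $(Z_\bot X Z^+)^2 = 0$, the exponential series $\sum_{n\ge 0} (Z_\bot X Z^+)^n/n!$ truncates after the linear term, giving $\exp(Z_\bot X Z^+) = id_k + Z_\bot X Z^+$. For \eqref{exp1} I would multiply this on the right by $Z$ and use $Z^+ Z = id_r$:
\[
\exp(Z_\bot X Z^+) Z = Z + Z_\bot X Z^+ Z = Z + Z_\bot X .
\]
For \eqref{exp11} I multiply \eqref{exp} on the right by $Z_\bot$ and use $Z^+ Z_\bot = 0$ again:
\[
\exp(Z_\bot X Z^+) Z_\bot = Z_\bot + Z_\bot X (Z^+ Z_\bot) = Z_\bot .
\]

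There is no real obstacle here: the whole proposition rests on the two pseudo-inverse relations $Z^+Z = id_r$ and $Z^+ Z_\bot = 0$, and once the product formula $(Z_\bot X Z^+)(Z_\bot \tilde X Z^+)=0$ is in hand everything else is a one-line consequence. The only point requiring a word of care is making explicit that $\mathfrak{g}_Z$ is genuinely a \emph{subalgebra} — i.e. closed under matrix multiplication and hence under the commutator — which is exactly what the vanishing of all pairwise products delivers (the product, and a fortiori the bracket, of any two elements is the zero matrix, which lies in $\mathfrak{g}_Z$ as the image of $X = 0$).
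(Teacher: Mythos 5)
Your proof is correct and follows essentially the same route as the paper: establish $(Z_\bot X Z^+)(Z_\bot \tilde X Z^+)=0$ from $Z^+Z_\bot=0$, deduce that the exponential series truncates, and then obtain \eqref{exp1} and \eqref{exp11} by right-multiplication using $Z^+Z=id_r$ and $Z^+Z_\bot=0$. You merely make explicit the one-line computation of the vanishing product that the paper states without detail.
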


\begin{proof}
Since
$
(Z_{\bot}X Z^+)(Z_{\bot}\tilde X Z^+)=0
$
holds for all $X,\tilde X \in  \mathbb{R}^{(k-r)\times r},$  the vector space
$\mathfrak{g}_{Z}$ is a closed subalgebra of 
the matrix unitary algebra $\mathbb{R}^{k \times k}.$ As a consequence, $(Z_{\bot}X Z^+)^p=0$ holds for all $X\in \mathbb{R}^{(k-r)\times r} $ and all $p\ge 2$, which proves 
 \eqref{exp}. 
 We directly deduce \eqref{exp1} using $ZZ^+ = id_r$, and \eqref{exp11} using $Z^+Z_\bot = 0$.
 \end{proof}

From Proposition \ref{algebra} and the definition of 
 $\mathcal{S}_{Z}$, we obtain
the following results.

\begin{corollary}\label{second_coordinate}
The affine cross section $\mathcal{S}_Z$ satisfies
\begin{align}
\mathcal{S}_{Z}=\{\exp(Z_{\bot}XZ^+)Z: X \in \mathbb{R}^{(k-r)\times r}\},\label{SZLiegroup}
\end{align}
and 
\begin{align}
[\exp(Z_{\bot}XZ^+)Z \,| Z_{\bot}] \in \mathrm{GL}_k \label{expconcat}
\end{align}
for all $X \in \mathbb{R}^{(k-r)\times r}$, where the brackets $\left[\cdot|\cdot\right]$ are used for matrix concatenation.
\end{corollary}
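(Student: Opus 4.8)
The plan is to derive both claims of Corollary~\ref{second_coordinate} directly from the two characterizations of $\mathcal{S}_Z$ already established, namely the affine description \eqref{SZ} from Lemma~\ref{lemma:SZ} and the identities \eqref{exp1}, \eqref{exp11} from Proposition~\ref{algebra}. For \eqref{SZLiegroup}, I would simply note that \eqref{exp1} says $\exp(Z_\bot X Z^+)Z = Z + Z_\bot X$ for every $X \in \mathbb{R}^{(k-r)\times r}$, so the set $\{\exp(Z_\bot X Z^+)Z : X \in \mathbb{R}^{(k-r)\times r}\}$ coincides term-by-term with $\{Z + Z_\bot X : X \in \mathbb{R}^{(k-r)\times r}\}$, which is exactly $\mathcal{S}_Z$ by \eqref{SZ}. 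This is a one-line identification with no obstacle.

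For the concatenation claim \eqref{expconcat}, the plan is to show the $k \times k$ matrix $M := [\,Z + Z_\bot X \mid Z_\bot\,]$ (using \eqref{exp1} to replace the first block) is invertible. The clean argument is to exhibit an explicit factorization: since the columns of $Z$ span an $r$-dimensional subspace, the columns of $Z_\bot$ span the complementary $(k-r)$-dimensional subspace $\mathrm{col}_{k,r}(Z)^\bot = \mathrm{col}_{k,k-r}(Z_\bot)$, and $Z$ and $Z_\bot$ both have full column rank, so $[\,Z \mid Z_\bot\,] \in \mathrm{GL}_k$. Then I would write
$$
[\,Z + Z_\bot X \mid Z_\bot\,] = [\,Z \mid Z_\bot\,]\begin{pmatrix} id_r & 0 \\ X & id_{k-r}\end{pmatrix},
$$
and since the block-triangular factor has determinant $1$ and hence lies in $\mathrm{GL}_k$, the product is in $\mathrm{GL}_k$. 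Combined with \eqref{exp1} this gives \eqref{expconcat}.

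The only point requiring a word of justification is that $[\,Z \mid Z_\bot\,]$ is invertible; this follows because any vector in the intersection of the two column spaces lies in $\mathrm{col}_{k,r}(Z) \cap \mathrm{col}_{k,r}(Z)^\bot = \{0\}$, so the $k$ columns are linearly independent, hence a basis of $\mathbb{R}^k$. I expect no real obstacle here — the statement is essentially a repackaging of Proposition~\ref{algebra} together with the elementary fact that $Z$ and a complementary basis concatenate to an invertible matrix. One small care: the factorization identity $Z_\bot X = Z \cdot 0 + Z_\bot \cdot X$ must be read with the right block sizes, so I would state the $2\times 2$ block matrix with blocks of sizes $r$ and $k-r$ explicitly as above to avoid ambiguity.
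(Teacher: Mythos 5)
Your proof is correct, and the first claim is handled exactly as the paper does: identify $\{ \exp(Z_\bot X Z^+)Z\}$ with $\{Z+Z_\bot X\}$ term by term via \eqref{exp1} and invoke \eqref{SZ}. For the second claim you take a genuinely different, though closely parallel, route. The paper factors the concatenation on the \emph{left}, writing
$[\exp(Z_\bot X Z^+)Z \,|\, Z_\bot] = \exp(Z_\bot X Z^+)[Z\,|\,Z_\bot]$,
which requires the identity \eqref{exp11} (the exponential fixes $Z_\bot$) and exploits the fact that $\exp(Z_\bot X Z^+)\in\mathrm{GL}_k$; this keeps the argument inside the Lie-algebra machinery the section is building. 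You instead factor on the \emph{right} by the unipotent block matrix $\left(\begin{smallmatrix} id_r & 0 \\ X & id_{k-r}\end{smallmatrix}\right)$, which is pure linear algebra, avoids \eqref{exp11} altogether, and makes the determinant computation transparent. Both arguments reduce to the invertibility of $[Z\,|\,Z_\bot]$; you supply the short justification (complementarity of the column spaces), whereas the paper simply asserts it. Either version is acceptable; the paper's has the small aesthetic advantage of exhibiting the group element $\exp(Z_\bot X Z^+)$ acting directly on the frame $[Z\,|\,Z_\bot]$, which foreshadows the Lie-group identification of $\mathcal{S}_Z$ with $\mathcal{G}_Z$ in the next lemma.
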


\begin{proof} From Proposition \ref{algebra} and  \eqref{SZ}, we obtain \eqref{SZLiegroup} and we can write
$$
[\exp(Z_{\bot}XZ^+)Z \,| Z_{\bot}] = [\exp(Z_{\bot}XZ^+)Z \,| \exp(Z_{\bot}XZ^+)Z_{\bot}] = \exp(Z_{\bot}XZ^+)[Z|Z_{\bot}].
$$
Since $\exp(Z_{\bot}XZ^+),[Z|Z_{\bot}] \in \mathrm{GL}_k$, \eqref{expconcat} follows. 
\end{proof}

Now we need to introduce the following definition and proposition (see \cite[p.80]{procesi2007}).

\begin{definition}
Let $(\mathbb{K},+,\cdot)$ be a ring and let $(\mathbb{K},+)$ be its additive group. A subset $\mathbb{I} \subset \mathbb{K}$
is called a two-sided ideal (or simply an ideal) of $\mathbb{K}$ if it is an additive subgroup of $\mathbb{K}$ such that
$\mathbb{I}\cdot\mathbb{K}:=\{r \cdot x : r \in \mathbb{I} \text{ and } x \in \mathbb{K}\} \subset \mathbb{I}$ and
$\mathbb{K}\cdot\mathbb{I}:=\{x \cdot r : r \in \mathbb{I} \text{ and } x \in \mathbb{K}\} \subset \mathbb{I}.$
\end{definition}

\begin{proposition}\label{prop:procesi}
If $\mathfrak{g} \subset \mathfrak{h}$ is a two-sided ideal of the Lie algebra $\mathfrak{h}$ of a group $\mathcal{H}$, then the subgroup $\mathcal{G} \subset \mathcal{H}$ generated by 
$\exp(\mathfrak{g})=\{\exp(G): G \in \mathfrak{g}\}$ is normal and closed, with Lie algebra $\mathfrak{h}.$
\end{proposition}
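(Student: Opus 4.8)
The plan is to read the statement in the concrete form in which it will actually be used, namely with $\mathfrak{h}$ a finite‑dimensional unital associative real algebra (a subalgebra of some $\mathbb{R}^{N\times N}$) and $\mathcal{H}$ a subgroup of its group of units, so that $\exp\colon\mathfrak{h}\to\mathcal{H}$ is the matrix exponential and $h,h^{-1}\in\mathfrak{h}$ for every $h\in\mathcal{H}$; one then verifies the three assertions — identification of the Lie algebra, normality, and closedness — in turn. The Lie algebra claim is the quickest: a two‑sided associative ideal $\mathfrak{g}$ is in particular a linear subspace closed under the commutator, since $[A,G]=AG-GA\in\mathfrak{g}$ for $A\in\mathfrak{h}$ and $G\in\mathfrak{g}$, hence a Lie subalgebra; so by the Lie subalgebra--subgroup correspondence $\langle\exp(\mathfrak{g})\rangle$ is the unique connected subgroup of $\mathcal{H}$ with Lie algebra $\mathfrak{g}$ — equivalently, by the Baker--Campbell--Hausdorff formula, its Lie algebra is the subalgebra generated by $\mathfrak{g}$, which is $\mathfrak{g}$ itself. (We therefore read the printed ``with Lie algebra $\mathfrak{h}$'' as a misprint for ``with Lie algebra $\mathfrak{g}$''.)

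For normality I would argue directly. Given $h\in\mathcal{H}$ and $G\in\mathfrak{g}$, the identity $h\exp(G)h^{-1}=\exp(hGh^{-1})$ holds term by term, and $hGh^{-1}\in\mathfrak{h}\cdot\mathfrak{g}\cdot\mathfrak{h}\subseteq\mathfrak{g}$ precisely because $\mathfrak{g}$ is a two‑sided ideal; applying this also to $h^{-1}$ in place of $h$ gives $h\exp(\mathfrak{g})h^{-1}=\exp(\mathfrak{g})$. Thus conjugation by any $h\in\mathcal{H}$ is an automorphism of $\mathcal{H}$ carrying the generating set $\exp(\mathfrak{g})$ onto itself, hence carrying $\mathcal{G}=\langle\exp(\mathfrak{g})\rangle$ onto itself, so $\mathcal{G}\trianglelefteq\mathcal{H}$. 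Note that this needs no connectedness hypothesis on $\mathcal{H}$: it uses the ideal property of $\mathfrak{g}$ inside the associative algebra $\mathfrak{h}$ rather than the adjoint action of a connected group.

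Closedness is the step I expect to be the genuine obstacle, because a connected Lie subgroup of a Lie group need not be closed (dense one‑parameter subgroups of a torus); what rescues the statement is exactly the ``ring'' hypothesis, which makes $\mathcal{H}$ a real algebraic group and $\mathcal{G}$ — the connected normal subgroup attached to the ideal $\mathfrak{g}$ — an algebraic, hence closed, subgroup, which is the content borrowed from \cite[p.80]{procesi2007}. In the situation actually used in this paper there is nothing to prove: by Proposition \ref{algebra} the ideals $\mathfrak{g}$ in question are nilpotent with $\mathfrak{g}^{2}=0$, so $\exp(G)=1+G$ and $\exp(\mathfrak{g})=1+\mathfrak{g}$ is already a subgroup, since $(1+G)(1+G')=1+(G+G'+GG')\in 1+\mathfrak{g}$ and $(1+G)^{-1}=1-G\in 1+\mathfrak{g}$; hence $\mathcal{G}=1+\mathfrak{g}$ is an affine subspace of the finite‑dimensional algebra $\mathfrak{h}$, and is therefore closed, as well as normal by the previous paragraph.
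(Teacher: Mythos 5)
The paper offers no proof of this proposition at all --- it is imported verbatim from \cite[p.80]{procesi2007} --- so your argument is not competing with anything in the text; it is a genuine supplement. Your reading of the hypotheses is the right one: the paper's ``two-sided ideal'' is an ideal of an \emph{associative} matrix algebra, and under that reading your normality argument ($h\exp(G)h^{-1}=\exp(hGh^{-1})$ with $hGh^{-1}\in\mathfrak{h}\cdot\mathfrak{g}\cdot\mathfrak{h}\subseteq\mathfrak{g}$) is clean and complete, as is the identification of the Lie algebra of $\langle\exp(\mathfrak{g})\rangle$ with $\mathfrak{g}$; you are also right that ``with Lie algebra $\mathfrak{h}$'' must be a misprint for $\mathfrak{g}$. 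Your instinct that closedness is the delicate point is correct for an essential reason worth making explicit: for a mere Lie-algebra ideal the claim is \emph{false} (an irrational line in the Lie algebra of $T^2$ is an ideal whose analytic subgroup is dense and not closed), so the associative-ideal hypothesis is doing real work, not just cosmetic work.

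The one soft spot is your treatment of closedness in general: asserting that the ring hypothesis ``makes $\mathcal{H}$ a real algebraic group'' is not justified --- a subgroup of the unit group of a finite-dimensional algebra need not be algebraic (the dense winding of the torus embeds in $\mathrm{GL}_2(\mathbb{C})$). A complete elementary argument along your own lines is available: since $\mathfrak{g}$ is an associative ideal, $\exp(\mathfrak{g})\subseteq 1+\mathfrak{g}$ and $(1+\mathfrak{g})\cap\mathcal{H}$ is a closed subgroup of $\mathcal{H}$ with Lie algebra $\mathfrak{g}$ (if $u$ inverts $1+g$ then $u=1-gu\in 1+\mathfrak{g}$), so $\mathcal{G}$ is its identity component and hence closed. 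That said, you do give a complete and correct proof of closedness in the only case the paper ever uses, namely $\mathfrak{g}^2=0$, where $\mathcal{G}=1+\mathfrak{g}$ is an affine subspace; so for the purposes of this paper your argument suffices.
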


From the above proposition, we deduce the following result. 

\begin{lemma}
Let $Z \in \mathcal{M}_r(\mathbb{R}^{k\times r})$ and 
$Z_{\bot} \in \mathcal{M}_{k-r}(\mathbb{R}^{k\times (k-r)})$ be such that $Z^TZ_{\bot}=0.$ Then $\mathfrak{g}_{Z} \subset \mathbb{R}^{k\times k}$ 
is a two-sided ideal of the Lie algebra $\mathbb{R}^{k\times k}$ and hence
\begin{align}
\mathcal{G}_{Z}:=\{\exp(Z_{\bot}X Z^+):X \in \mathbb{R}^{(k-r)\times r}\} \label{GZ}
\end{align}
is a closed Lie group 
with Lie algebra $\mathfrak{g}_{Z}.$ Furthermore, the map
$
\exp:\mathfrak{g}_{Z} \longrightarrow \mathcal{G}_{Z}
$
is bijective.
\end{lemma}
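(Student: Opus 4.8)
The plan is to verify the two hypotheses of Proposition~\ref{prop:procesi} for $\mathfrak{g} = \mathfrak{g}_Z$ inside $\mathfrak{h} = \mathbb{R}^{k\times k}$ (the Lie algebra of $\mathcal{H} = \mathrm{GL}_k$), and then identify the resulting subgroup $\mathcal{G}$ with the set $\mathcal{G}_Z$ in \eqref{GZ}. First I would check that $\mathfrak{g}_Z$ is a two-sided ideal of the associative algebra $\mathbb{R}^{k\times k}$, hence \emph{a fortiori} a Lie ideal: given an arbitrary $M \in \mathbb{R}^{k\times k}$ and a generator $Z_\bot X Z^+ \in \mathfrak{g}_Z$, one has $M (Z_\bot X Z^+) = Z_\bot (Z_\bot^+ M Z_\bot X) Z^+ \in \mathfrak{g}_Z$ since $Z_\bot Z_\bot^+$ acts as the identity on $\mathrm{col}_{k,k-r}(Z_\bot)$ and $Z_\bot X \in \mathrm{col}_{k,k-r}(Z_\bot)$; symmetrically, $(Z_\bot X Z^+) M = Z_\bot X (Z^+ M) = Z_\bot (X Z^+ M Z)(Z^+ ) \in \mathfrak{g}_Z$ using $Z^+ Z = id_r$. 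This also shows $[\mathfrak{g}_Z, \mathbb{R}^{k\times k}] \subset \mathfrak{g}_Z$, so the Lie-ideal condition holds; combined with the commutativity established in Proposition~\ref{algebra}, $\mathfrak{g}_Z$ is in fact an abelian ideal.

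Next I would apply Proposition~\ref{prop:procesi} to conclude that the subgroup $\mathcal{G}$ of $\mathrm{GL}_k$ generated by $\exp(\mathfrak{g}_Z)$ is closed and normal with Lie algebra $\mathfrak{g}_Z$. It remains to show $\mathcal{G} = \mathcal{G}_Z$, i.e.\ that $\exp(\mathfrak{g}_Z)$ is \emph{already} a group, so no further products are needed to generate $\mathcal{G}$. This is immediate from \eqref{exp} and the nilpotency $(Z_\bot X Z^+)(Z_\bot \tilde X Z^+) = 0$ of Proposition~\ref{algebra}: for any $X, \tilde X \in \mathbb{R}^{(k-r)\times r}$,
$$
\exp(Z_\bot X Z^+)\exp(Z_\bot \tilde X Z^+) = (id_k + Z_\bot X Z^+)(id_k + Z_\bot \tilde X Z^+) = id_k + Z_\bot (X+\tilde X) Z^+ = \exp\big(Z_\bot (X+\tilde X) Z^+\big),
$$
so $\exp(\mathfrak{g}_Z)$ is closed under multiplication, contains $id_k = \exp(0)$, and contains inverses since $\exp(Z_\bot X Z^+)^{-1} = \exp(-Z_\bot X Z^+) = id_k - Z_\bot X Z^+$. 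Hence $\exp(\mathfrak{g}_Z) = \mathcal{G}_Z$ is a subgroup, which therefore coincides with $\mathcal{G}$; being closed with Lie algebra $\mathfrak{g}_Z$, it is a closed Lie group as claimed.

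Finally, bijectivity of $\exp:\mathfrak{g}_Z \to \mathcal{G}_Z$: surjectivity is the definition of $\mathcal{G}_Z$, and injectivity follows because the map $X \mapsto Z_\bot X Z^+$ is injective on $\mathbb{R}^{(k-r)\times r}$ (as $Z_\bot$ has full column rank and $Z^+$ has full row rank, $Z_\bot X Z^+ = 0$ forces $X = Z_\bot^+ (Z_\bot X Z^+) Z = 0$), so that $\exp(Z_\bot X Z^+) = id_k + Z_\bot X Z^+ = id_k + Z_\bot \tilde X Z^+ = \exp(Z_\bot \tilde X Z^+)$ implies $X = \tilde X$. I expect the only real subtlety to be the ideal verification — specifically getting the pseudo-inverse bookkeeping right so that the products $M(Z_\bot X Z^+)$ and $(Z_\bot X Z^+)M$ are genuinely written back in the canonical form $Z_\bot (\,\cdot\,) Z^+$; everything downstream is then a direct consequence of Proposition~\ref{algebra} and Proposition~\ref{prop:procesi}.
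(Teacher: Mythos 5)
Your overall route is the same as the paper's: verify that $\mathfrak{g}_Z$ is a two-sided ideal, invoke Proposition~\ref{prop:procesi}, and then check directly that $\exp(\mathfrak{g}_Z)$ is already a group and that $\exp$ is injective. However, the ideal verification---which is also the step the paper itself relies on---does not hold up. Your identity $M(Z_\bot XZ^+)=Z_\bot(Z_\bot^+MZ_\bot X)Z^+$ amounts to $MZ_\bot XZ^+=Z_\bot Z_\bot^+MZ_\bot XZ^+$, i.e.\ to the claim that the columns of $MZ_\bot X$ lie in $\mathrm{col}_{k,k-r}(Z_\bot)$; your justification only covers $Z_\bot X$ itself, and after left multiplication by an arbitrary $M$ this fails. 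Concretely, for $k=2$, $r=1$, $Z=e_1$, $Z_\bot=e_2$ (standard basis vectors), $X=1$ and $M=e_1e_2^T$, one gets $M(Z_\bot XZ^+)=e_1e_1^T\notin\mathfrak{g}_Z$, while your right-hand side evaluates to $0$; the symmetric identity for $(Z_\bot XZ^+)M$ fails on the same example, giving $e_2e_2^T\notin\mathfrak{g}_Z$. In fact no bookkeeping can repair this: $\mathbb{R}^{k\times k}$ is a simple associative algebra, so its only two-sided ideals are $\{0\}$ and the whole algebra, and for $0<r<k$ the space $\mathfrak{g}_Z$ is neither (nor is it a Lie ideal of $\mathfrak{gl}_k$, whose proper nonzero ideals are only the scalar matrices and $\mathfrak{sl}_k$). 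So the first assertion of the lemma is not provable and Proposition~\ref{prop:procesi} cannot be invoked. You have faithfully reproduced the paper's own gap---its proof uses the very same two false identities---rather than introduced a new one.

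The substantive conclusions survive, and your proposal already contains the correct argument for them. Your computation $\exp(Z_\bot XZ^+)\exp(Z_\bot\tilde XZ^+)=\exp(Z_\bot(X+\tilde X)Z^+)$, which uses only the nilpotency from Proposition~\ref{algebra} and formula \eqref{exp}, shows that $\mathcal{G}_Z=id_k+\mathfrak{g}_Z$ is an abelian subgroup of $\mathrm{GL}_k$. It is closed because it is an affine subspace of $\mathbb{R}^{k\times k}$ consisting entirely of invertible matrices, and it is a Lie subgroup with Lie algebra $\mathfrak{g}_Z$ because its one-parameter subgroups are exactly $t\mapsto id_k+tZ_\bot XZ^+$. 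Combined with your (correct) injectivity argument for $\exp$, this yields every conclusion of the lemma that is used later, with no appeal to the ideal property. I would restructure the proof around that direct computation and drop the ideal claim entirely.
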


\begin{proof}
Consider $Z_{\bot}X Z^+ \in \mathfrak{g}_{Z}$ and
$A \in \mathbb{R}^{k\times k}$. Noting that $Z^{+}Z = id_{r}$ and $(Z_{\bot})^{+}Z_{\bot}=id_{k-r}$, we have that 
$$
(Z_{\bot}X Z^+)A = Z_{\bot}(XZ^{+}AZ)Z^+ ,
$$
which proves that 
$\mathfrak{g}_{Z}\cdot\mathbb{R}^{k\times k} \subset \mathfrak{g}_{Z}.$ 
Similarly, we have that
$$
A(Z_{\bot}XZ^+) = Z_{\bot}((Z_{\bot})^+AZ_{\bot}X)Z^{+},
$$
which proves that $\mathbb{R}^{k\times k} \cdot \mathfrak{g}_{Z} \subset \mathfrak{g}_{Z}.$ This proves that $ \mathfrak{g}_{Z}$ is a 
two-sided ideal. The map  $\exp$ is clearly
surjective. To prove that it is injective, we  assume
$\exp(Z_{\bot}XZ^+)=\exp(Z_{\bot}\tilde XZ^+)$ for $X,\tilde X \in  \mathbb{R}^{(k-r)\times r} $. 
Then from  \eqref{exp}, we obtain $Z+Z_{\bot}X=Z+Z_{\bot}\tilde X$
and hence $X=\tilde X$,  i.e. $Z_{\bot}XZ^+=Z_{\bot}\tilde XZ^+$ in $\mathfrak{g}_{Z}.$
\end{proof}

Finally, we can prove the following result.
\begin{theorem}
The set $\mathcal{S}_{Z}$ together with the group operation $\times_{Z}$ defined by
\begin{align}\label{Lie1}
\exp(Z_{\bot}XZ^+)Z \times_{Z} \exp(Z_{\bot}\tilde XZ^+)Z = \exp(Z_{\bot}(X+\tilde X)Z^+)Z 
\end{align}
for  $X,\tilde X \in \mathbb{R}^{(k-r)\times r}$ 
is a Lie group.
\end{theorem}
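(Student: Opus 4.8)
The plan is to establish the three standard axioms of a Lie group for $(\mathcal{S}_Z, \times_Z)$: that $\times_Z$ is a well-defined group operation, that $\mathcal{S}_Z$ carries a smooth (indeed analytic) manifold structure, and that multiplication and inversion are analytic maps. The key observation, already assembled in Corollary \ref{second_coordinate} and the preceding lemma, is that the map $\mathbb{R}^{(k-r)\times r} \to \mathcal{S}_Z$ sending $X \mapsto \exp(Z_\bot X Z^+)Z = Z + Z_\bot X$ is a bijection (this is $\eta_Z$ from Lemma \ref{lemma:SZ} composed with the identification \eqref{exp1}). So everything can be transported through this single global chart.

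First I would verify that $\times_Z$ is well defined: by Lemma \ref{lemma:SZ}, every element of $\mathcal{S}_Z$ is uniquely of the form $Z + Z_\bot X = \exp(Z_\bot X Z^+)Z$ for a unique $X \in \mathbb{R}^{(k-r)\times r}$, so the right-hand side of \eqref{Lie1} is unambiguous. Next I would check the group axioms by pushing the operation through $\eta_Z$: the point is that \eqref{Lie1} says precisely that $\eta_Z: (\mathbb{R}^{(k-r)\times r}, +) \to (\mathcal{S}_Z, \times_Z)$ intertwines addition with $\times_Z$. Hence $(\mathcal{S}_Z, \times_Z)$ is isomorphic as an abstract group to the additive group $(\mathbb{R}^{(k-r)\times r}, +)$: the identity element is $Z = \eta_Z(0)$, the inverse of $\exp(Z_\bot X Z^+)Z$ is $\exp(-Z_\bot X Z^+)Z = Z - Z_\bot X$, associativity and commutativity follow from those of matrix addition. (One may also note this is consistent with the fact from the earlier lemma that $\exp: \mathfrak{g}_Z \to \mathcal{G}_Z$ is a bijective group homomorphism onto the commutative group $\mathcal{G}_Z$, and $\mathcal{S}_Z = \mathcal{G}_Z \cdot Z$.)

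For the manifold and smoothness structure, I would equip $\mathcal{S}_Z$ with the one-chart analytic atlas $\{(\mathcal{S}_Z, \eta_Z^{-1})\}$, where $\eta_Z^{-1}(Z + Z_\bot X) = X \in \mathbb{R}^{(k-r)\times r}$ (this is compatible with the manifold structure on $\mathcal{M}_r(\mathbb{R}^{k\times r})$ to be established in Section \ref{sec:fullrank}, and with the chart $\varphi_Z$ on $\mathbb{G}_r(\mathbb{R}^k)$ via $\mathrm{col}_{k,r}$). In this chart, the multiplication map reads $(X, \tilde X) \mapsto X + \tilde X$ and the inversion map reads $X \mapsto -X$, both of which are manifestly analytic. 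Therefore $(\mathcal{S}_Z, \times_Z)$ is an analytic Lie group, in fact analytically isomorphic to $(\mathbb{R}^{r(k-r)}, +)$.

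The only mild subtlety — and the one place to be careful rather than the main obstacle — is to make sure the chart $\eta_Z^{-1}$ is the one induced by the ambient structures, so that the resulting Lie group structure is genuinely the one on the neighbourhood $\mathfrak{U}_Z \subset \mathbb{G}_r(\mathbb{R}^k)$ (identified with $\mathcal{S}_Z$ via $\mathrm{col}_{k,r}$, which is a bijection by Corollary \ref{colbijective}) rather than an artificial transplant. This is immediate here because $\varphi_Z \circ \mathrm{col}_{k,r} \circ \eta_Z = \mathrm{id}$ by the very definition of $\varphi_Z$, so the chart on $\mathfrak{U}_Z$ and the chart $\eta_Z^{-1}$ on $\mathcal{S}_Z$ correspond under $\mathrm{col}_{k,r}$. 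Since $\times_Z$ is literally transported addition, there is no real computation to grind through; the proof is essentially the remark that a bijection from a vector space equips the target with a canonical abelian Lie group structure.
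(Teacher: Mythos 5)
Your proposal is correct and follows essentially the same route as the paper: both proofs amount to transporting the additive group structure of $\mathbb{R}^{(k-r)\times r}$ through the bijection $X \mapsto \exp(Z_{\bot}XZ^+)Z = Z + Z_{\bot}X$, the paper simply writing the multiplication and inversion maps directly on $\mathcal{S}_Z$ via the coordinate extraction $W \mapsto Z_\bot^+(W-Z)$ and observing they are analytic. Your version spells out the well-definedness and the group axioms a little more explicitly, but the underlying argument is identical.
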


\begin{proof}
To prove that it is a Lie group, we simply note that the multiplication and inversion maps
$$
\mu:\mathcal{S}_{Z} \times \mathcal{S}_{Z} \longrightarrow \mathcal{S}_{Z},~
(W,\tilde W) \mapsto \exp(Z_{\bot}(Z_\bot^+ (W-Z) +Z_\bot^+(\tilde W-Z))Z^+)Z
$$
and
$$
\delta:\mathcal{S}_{Z}  \longrightarrow \mathcal{S}_{Z},~
W \mapsto \exp(-Z_{\bot} Z_\bot^+ (W-Z) Z^+)Z
$$
are analytic. 
\end{proof}
It follows that $\mathfrak{U}_Z$ can be identified with a Lie group through the map $\varphi_Z$.
\begin{theorem}
Each neighbourhood $\mathfrak{U}_Z$ of $\mathbb{G}_r(\mathbb{R}^k)$ together with the group operation $\circ_Z$ defined by 
$$
\mathcal{V} \circ_Z \mathcal{V'} = \varphi_{Z}^{-1}(\varphi_{Z}(\mathcal{V})+\varphi_{Z}(\mathcal{V}') )
$$
for  $\mathcal{V},\mathcal{V'}  \in \mathfrak{U}_Z$, is a Lie group and the map
$
\gamma_{Z}: \mathfrak{U}_Z \longrightarrow \mathcal{G}_{Z}  
$
given by
$$
\gamma_{Z}(\mathcal{U})= \exp(Z_{\bot}\varphi_Z(\mathcal{U})Z^+)
$$
is a Lie group isomorphism.
\end{theorem}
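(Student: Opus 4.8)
The plan is to transport all the structure through the chart $\varphi_Z$, which does essentially all the work. First I would note that by Theorem~\ref{grassmann} the set $\mathfrak{U}_Z = \varphi_Z^{-1}(\mathbb{R}^{(k-r)\times r})$ is open in $\mathbb{G}_r(\mathbb{R}^k)$ (for the topology $\tau_{\mathcal{A}_{k,r}}$) and $\{(\mathfrak{U}_Z,\varphi_Z)\}$ is a single-chart analytic atlas on it, so $\mathfrak{U}_Z$ is an analytic manifold and $\varphi_Z$ an analytic diffeomorphism onto $\mathbb{R}^{(k-r)\times r}$. By the very definition of $\circ_Z$ one has $\varphi_Z(\mathcal{V}\circ_Z\mathcal{V}') = \varphi_Z(\mathcal{V}) + \varphi_Z(\mathcal{V}')$, with neutral element $\varphi_Z^{-1}(0) = \mathrm{col}_{k,r}(Z)$ and the inverse of $\mathcal{V}$ equal to $\varphi_Z^{-1}(-\varphi_Z(\mathcal{V}))$; hence $\varphi_Z$ is a group isomorphism onto $(\mathbb{R}^{(k-r)\times r},+)$ and the group axioms for $\circ_Z$ are inherited from the latter. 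Since the multiplication $(\mathcal{V},\mathcal{V}')\mapsto\varphi_Z^{-1}(\varphi_Z(\mathcal{V})+\varphi_Z(\mathcal{V}'))$ and the inversion $\mathcal{V}\mapsto\varphi_Z^{-1}(-\varphi_Z(\mathcal{V}))$ are compositions of $\varphi_Z$, $\varphi_Z^{-1}$ and the analytic addition and negation on $\mathbb{R}^{(k-r)\times r}$, they are analytic, so $(\mathfrak{U}_Z,\circ_Z)$ is an abelian analytic Lie group.

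For the isomorphism statement I would factor $\gamma_Z$ as $\exp|_{\mathfrak{g}_Z}\circ L\circ\varphi_Z$, where $L\colon\mathbb{R}^{(k-r)\times r}\to\mathfrak{g}_Z$ is the linear map $X\mapsto Z_\bot X Z^+$. The map $L$ is a vector space isomorphism: it is onto by the definition \eqref{hZ} of $\mathfrak{g}_Z$, and injective with inverse $B\mapsto Z_\bot^+ B Z$, since $Z_\bot^+(Z_\bot X Z^+)Z = X$ using $Z_\bot^+Z_\bot = id_{k-r}$ and $Z^+Z = id_r$. By \eqref{exp}, $\exp|_{\mathfrak{g}_Z}$ is the polynomial map $A\mapsto id_k+A$ with analytic inverse $C\mapsto C-id_k$ on $\mathcal{G}_Z$, and it is bijective onto $\mathcal{G}_Z$ by the Lemma above asserting bijectivity of $\exp\colon\mathfrak{g}_Z\to\mathcal{G}_Z$. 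Hence $\gamma_Z$ is a composition of analytic bijections with analytic inverses, i.e. an analytic diffeomorphism $\mathfrak{U}_Z\to\mathcal{G}_Z$.

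Finally I would check that $\gamma_Z$ intertwines $\circ_Z$ with matrix multiplication, the group law on $\mathcal{G}_Z$. Writing $X = \varphi_Z(\mathcal{U})$ and $X' = \varphi_Z(\mathcal{U}')$, so that $\varphi_Z(\mathcal{U}\circ_Z\mathcal{U}') = X+X'$, the nilpotency relation $(Z_\bot X Z^+)(Z_\bot X' Z^+) = 0$ of Proposition~\ref{algebra} together with \eqref{exp} gives
$$\gamma_Z(\mathcal{U})\,\gamma_Z(\mathcal{U}') = (id_k+Z_\bot X Z^+)(id_k+Z_\bot X' Z^+) = id_k+Z_\bot(X+X')Z^+ = \gamma_Z(\mathcal{U}\circ_Z\mathcal{U}'),$$
so $\gamma_Z$ is a group homomorphism, hence a Lie group isomorphism; combined with the fact that $\mathcal{G}_Z$ is a Lie group, this also re-proves that $(\mathfrak{U}_Z,\circ_Z)$ is one. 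I do not expect a genuine obstacle here: everything is forced by the definitions of $\circ_Z$ and $\gamma_Z$, and the only input from the earlier development is that $\mathcal{G}_Z$ is abelian with its multiplication collapsing to addition of the parameters $X$ — precisely the relation $(Z_\bot X Z^+)(Z_\bot\tilde X Z^+)=0$. The mildest care is needed only in confirming that each map in the chain, namely $\varphi_Z$, $L$, $\exp|_{\mathfrak{g}_Z}$ and their inverses, is analytic.
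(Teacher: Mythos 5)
Your proof is correct and follows exactly the route the paper intends: the paper states this theorem without proof, remarking only that ``$\mathfrak{U}_Z$ can be identified with a Lie group through the map $\varphi_Z$,'' which is precisely your strategy of transporting the additive group of $\mathbb{R}^{(k-r)\times r}$ through the chart and factoring $\gamma_Z$ as $\exp|_{\mathfrak{g}_Z}\circ L\circ\varphi_Z$. Your explicit verification of the homomorphism property via the nilpotency relation of Proposition~\ref{algebra} supplies the details the paper omits, and all the individual steps check out.
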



\section{The non-compact Stiefel principal bundle $\mathcal{M}_{r}(\mathbb{R}^{k\times r})$}\label{sec:fullrank}

In this section, we give a new geometric description of the set $\mathcal{M}_{r}(\mathbb{R}^{k\times r})$ of matrices with full rank $r< k$, which is based on the geometric description of the Grassmann manifold given in Section \ref{sec:grassmann}. 

\subsection{Principal bundle structure of $\mathcal{M}_{r}(\mathbb{R}^{k\times r})$}

For $Z \in \mathcal{M}_{r}(\mathbb{R}^{k\times r}) $, we define a neighbourhood of $Z$  as
 \begin{align}
\mathcal{V}_Z := \{W \in \mathcal{M}_r(\mathbb{R}^{k\times r}): \det(Z^TW) \neq 0\} \supset \mathcal{S}_{Z}. 
\end{align}
From Proposition~\ref{geometryU}, we know that for a given matrix $W \in \mathcal{V}_Z$,  
there exists a unique pair of matrices $(X,G) \in \mathbb{R}^{(k-r)\times r} \times \mathrm{GL}_r$ such that 
$W = (Z+Z_{\bot}X)G.$ Therefore, 
$$
\mathcal{V}_Z  = \{(Z+Z_{\bot}X)G : X \in \mathbb{R}^{(k-r)\times r}, G\in  \mathrm{GL}_r \}.
$$
 It allows us to introduce a parametrisation $\xi_Z^{-1}$ (see Figure \ref{param_Mkxr}) defined through the bijection  
\begin{align}
\xi_{Z}:\mathcal{V}_Z \longrightarrow \mathbb{R}^{(k-r) \times r} \times \mathrm{GL}_r, \label{xiZ}
\end{align}
such that 
$$
\xi_Z^{-1}(X,G) = (Z+Z_{\bot}X)G
$$
for $(X,G) \in \mathbb{R}^{(k-r) \times r} \times \mathrm{GL}_r$, and 
\begin{align*}
\xi_Z(W) = (Z_\bot^+W(Z^+W)^{-1},Z^+W)
\end{align*}
for $W\in \mathcal{V}_Z$.
 In particular, 
$$
\xi_Z^{-1}(0,id_r) = Z. 
$$
\begin{figure}[h]
\centering
\includegraphics[scale=1]{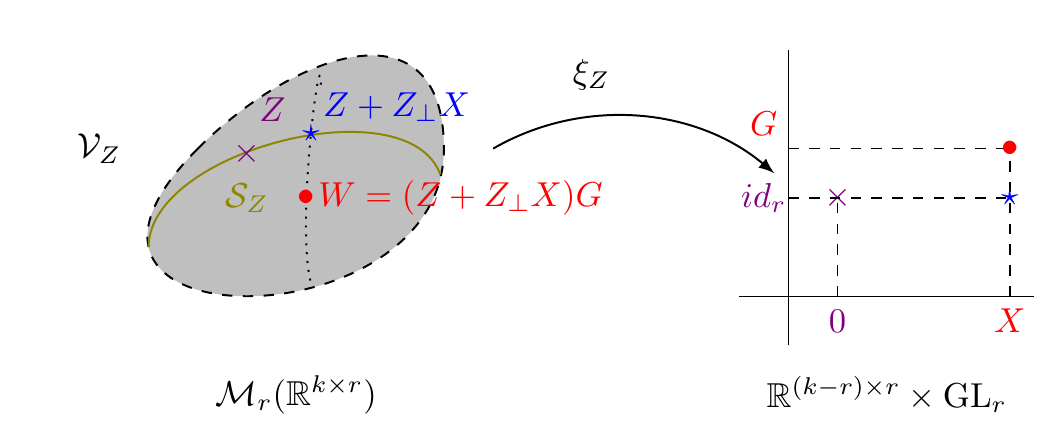}
\caption{Illustration of the chart $\xi_Z$ which associates to $W = (Z+Z_\bot X)G\in \mathcal{V}_Z \subset \mathcal{M}_{r}(\mathbb{R}^{k\times r})$ the parameters $(X,G)$ in $\mathbb{R}^{(k-r) \times r} \times \mathrm{GL}_r$.}
\label{param_Mkxr}
\end{figure}

\begin{theorem}\label{theoremB}
The collection $\mathcal{B}_{k,r}:=\{(\mathcal{V}_Z,\xi_{Z}):Z \in \mathcal{M}_r(\mathbb{R}^{k\times r})\}$ is
an analytic atlas for $\mathcal{M}_r(\mathbb{R}^{k\times r})$, 
and hence $(\mathcal{M}_r(\mathbb{R}^{k\times r}),\mathcal{B}_{k,r})$ it is an analytic $kr$-dimensional
manifold modelled on $\mathbb{R}^{(k-r) \times r} \times \mathbb{R}^{r \times r}.$
\end{theorem}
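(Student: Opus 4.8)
The plan is to verify the two defining properties of an analytic atlas in the sense of Lang, exactly mirroring the structure of the proof of Theorem \ref{grassmann}. First I would check that $\{\mathcal{V}_Z\}_{Z\in\mathcal{M}_r(\mathbb{R}^{k\times r})}$ is a covering: given $W\in\mathcal{M}_r(\mathbb{R}^{k\times r})$, one has $W\in\mathcal{V}_W$ since $\det(W^TW)\neq 0$ because $W$ has full rank. Next I would confirm that $\xi_Z$ is a well-defined bijection onto $\mathbb{R}^{(k-r)\times r}\times\mathrm{GL}_r$: surjectivity and the formula for $\xi_Z^{-1}$ follow from Lemma \ref{lemma:SZ}, while injectivity and the explicit inverse $\xi_Z(W)=(Z_\bot^+W(Z^+W)^{-1},Z^+W)$ follow from Proposition \ref{geometryU}, since for $W\in\mathcal{V}_Z$ the unique $G_W\in\mathrm{GL}_r$ with $WG_W^{-1}\in\mathcal{S}_Z$ is $G_W=(Z^TZ)^{-1}(Z^TW)=Z^+W$, and then $Z_\bot^+(WG_W^{-1}-Z)=Z_\bot^+WG_W^{-1}$ recovers $X$. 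I would note that $\mathbb{R}^{(k-r)\times r}\times\mathrm{GL}_r$ is an open subset of $\mathbb{R}^{(k-r)\times r}\times\mathbb{R}^{r\times r}\cong\mathbb{R}^{kr}$, since $\mathrm{GL}_r$ is open in $\mathbb{R}^{r\times r}$ (complement of the zero set of $\det$).

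The core of the argument is the compatibility of overlapping charts. Suppose $\mathcal{V}_Z\cap\mathcal{V}_{\tilde Z}\neq\emptyset$ and take $W=(Z+Z_\bot X)G\in\mathcal{V}_Z$; I would show $W\in\mathcal{V}_{\tilde Z}$ precisely when $\det(\tilde Z^T W)\neq 0$, which cuts out an open subset of $\mathbb{R}^{(k-r)\times r}\times\mathrm{GL}_r$ in the $\xi_Z$ coordinates — the set where $(X,G)\mapsto\det\bigl(\tilde Z^T(Z+Z_\bot X)G\bigr)$ is nonzero, an analytic (indeed polynomial in $X$, and a product with $\det G$) function, hence its non-vanishing locus is open. (Unlike the Grassmann case, here the overlap is generally a proper open subset, not all of the model space, because the $\mathrm{GL}_r$ factor imposes the genuine constraint.) Then the transition map is
$$
\xi_{\tilde Z}\circ\xi_Z^{-1}(X,G)=\Bigl(\tilde Z_\bot^+(Z+Z_\bot X)G\bigl(\tilde Z^+(Z+Z_\bot X)G\bigr)^{-1},\ \tilde Z^+(Z+Z_\bot X)G\Bigr),
$$
and since $\tilde Z^+(Z+Z_\bot X)G = \bigl(\tilde Z^+(Z+Z_\bot X)\bigr)G$ with $\tilde Z^+(Z+Z_\bot X)$ invertible on the overlap, the $G$-factors cancel in the first component, leaving $\tilde Z_\bot^+(Z+Z_\bot X)\bigl(\tilde Z^+(Z+Z_\bot X)\bigr)^{-1}$, which depends on $X$ alone. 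Both components are analytic: the first because matrix inversion is analytic on $\mathrm{GL}_r$ and $X\mapsto\tilde Z^+(Z+Z_\bot X)$ is affine, the second because it is affine in $X$ and linear in $G$. The inverse transition map has the same form with the roles of $Z$ and $\tilde Z$ swapped, so it too is analytic, giving an analytic diffeomorphism. Property (i) of Lang's definition — that $\xi_Z(\mathcal{V}_Z\cap\mathcal{V}_{\tilde Z})$ and $\xi_{\tilde Z}(\mathcal{V}_Z\cap\mathcal{V}_{\tilde Z})$ are open — follows from the observation above that these are non-vanishing loci of analytic determinant functions.

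The main obstacle, modest as it is, is bookkeeping the cancellation of the $\mathrm{GL}_r$-factor cleanly and making sure the domain restriction (the condition $\det(\tilde Z^T W)\neq 0$) is handled explicitly rather than glossed over, as it is exactly here that this proof genuinely differs from that of Theorem \ref{grassmann}. Once that is in place, the dimension count is immediate: the model space $\mathbb{R}^{(k-r)\times r}\times\mathbb{R}^{r\times r}$ has dimension $r(k-r)+r^2=kr$, so $(\mathcal{M}_r(\mathbb{R}^{k\times r}),\mathcal{B}_{k,r})$ is an analytic $kr$-dimensional manifold modelled on $\mathbb{R}^{(k-r)\times r}\times\mathbb{R}^{r\times r}$, as claimed.
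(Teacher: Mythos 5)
Your proposal is correct and follows essentially the same route as the paper's proof: openness of $\xi_Z(\mathcal{V}_Z\cap\mathcal{V}_{\tilde Z})$ via continuity of a determinant condition pulled back through $\xi_Z^{-1}$, and analyticity of the explicit transition map $\xi_{\tilde Z}\circ\xi_Z^{-1}$. Your additional remark that the overlap is in general a proper open subset of the model space (unlike the Grassmann case) is a correct and worthwhile clarification, as is the explicit cancellation of the $\mathrm{GL}_r$ factor in the first component.
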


\begin{proof}
 $\{\mathcal{V}_Z\}_{Z \in \mathcal{M}_r(\mathbb{R}^{k\times r})}$ is clearly a covering
of $\mathcal{M}_r(\mathbb{R}^{k\times r})$. Moreover, 
since $\xi_{Z}$ is bijective from $\mathcal{V}_Z$ to $\mathbb{R}^{(k-r) \times r} \times \mathrm{GL}_r$ we claim that if $\mathcal{V}_Z \cap \mathcal{V}_{\tilde Z} \neq \emptyset$ for $Z,\tilde Z \in \mathcal{M}_r(\mathbb{R}^{k\times r}),$ then  the following statements hold:
\begin{enumerate}
\item[i)] $\xi_{Z}(\mathcal{V}_Z \cap \mathcal{V}_{\tilde Z})$  and  $\xi_{\tilde Z}(\mathcal{V}_Z \cap \mathcal{V}_{\tilde Z})$ are open sets 
in $\mathbb{R}^{(k-r)\times r} \times \mathrm{GL}_r$ and
\item[ii)]the map $
\xi_{\tilde Z} \circ \xi_{Z}^{-1}
$ is analytic from $\xi_{Z}(\mathcal{V}_Z \cap \mathcal{V}_{\tilde Z}) \subset
\mathbb{R}^{(k-r)\times r} \times \mathrm{GL}_r$ to $\xi_{\tilde Z}(\mathcal{V}_Z \cap \mathcal{V}_{\tilde Z}) \subset \mathbb{R}^{(k-r)\times r} \times \mathrm{GL}_r$.
\end{enumerate}
In this proof, we equip $\mathbb{R}^{k\times r}$ with the topology $\tau_{\mathbb{R}^{k\times r}}$ induced by matrix norms. For any $Z \in \mathcal{M}_r(\mathbb{R}^{k\times r})$, $\mathcal{V}_Z= \{W \in \mathbb{R}^{k\times r} : \det(Z^TW)\neq 0\}$ is the inverse image of the open set $\mathbb{R}\setminus \{0\}$ by the continuous map $W\mapsto \det(Z^TW)$ from $\mathbb{R}^{k\times r}$ to $\mathbb{R}$, and therefore, $\mathcal{V}_Z$ is an open set of $\mathbb{R}^{k\times r}$. 
Since $\mathcal{V}_Z$ and $\mathcal{V}_{\tilde Z}$ are open sets in $\mathbb{R}^{k\times r}$, $\mathcal{V}_Z \cap \mathcal{V}_{\tilde Z}$ is also an open set in $\mathbb{R}^{k\times r}$ and since $\xi_Z^{-1}$ is a continuous map from $ \mathbb{R}^{(k-r) \times r} \times \mathrm{GL}_r$ to $\mathbb{R}^{k\times r}$, the set $\xi_Z(\mathcal{V}_Z \cap \mathcal{V}_{\tilde Z})$, as the inverse image of an open set by a continuous map, is an open set in $\mathbb{R}^{(k-r) \times r} \times \mathrm{GL}_r$. Similarly, $\xi_{\tilde Z}(\mathcal{V}_Z \cap \mathcal{V}_{\tilde Z})$ is an open set.
Now let $(X,G) \in \mathbb{R}^{(k-r)\times r}\times \mathrm{GL}_r$ such that  
$ \xi_{Z}^{-1}(X,G)   \in \mathcal{V}_Z \cap \mathcal{V}_{\tilde Z}$. From the expressions of $\xi_Z^{-1}$ and $ \xi_{\tilde Z}$, the map $\xi_{\tilde Z}\circ \xi_{Z}^{-1}$ is defined by
\begin{align*}
\xi_{\tilde Z}\circ \xi_{Z}^{-1}(X,G)&= (\tilde Z_\bot^+ \xi_{Z}^{-1}(X,G) (\tilde Z^+ \xi_{Z}^{-1}(X,G) )^{-1},\tilde Z^+ \xi_{Z}^{-1}(X,G) ), 
\end{align*}
with $\xi_{Z}^{-1}(X,G) = (Z+Z_{\bot}X)G$, 
which is clearly an analytic map.
\end{proof}


Before stating the next result, we recall the definition of a morphism between manifolds and of 
a  fibre bundle. We introduce notions of $\mathcal{C}^p $ maps and $\mathcal{C}^p$ manifolds, 
 with $p\in \mathbb{N}\cup \{\infty\}$ or $p=\omega$. In the latter case, $\mathcal{C^\omega}$ means analytic. 
 
\begin{definition}
Let $(\mathbb{M},\mathcal{A})$ and $(\mathbb{N},\mathcal{B})$ 
be two $\mathcal{C}^p$  manifolds. Let $F:\mathbb{M} \rightarrow \mathbb{N}$ be a map.
We  say that $F$ is a $\mathcal{C}^p$ \emph{morphism} between $(\mathbb{M},\mathcal{A})$ and $(\mathbb{N},\mathcal{B})$ if given $m \in \mathbb{M}$, there exists a chart $(U,\varphi) \in \mathcal{A}$ 
such that $m \in U$ and a chart $(W,\psi)\in \mathcal{B}$ such that $F(m)\in W$ where $F(U) \subset W,$ and the map 
\begin{equation*}
\psi \circ F \circ \varphi^{-1}:\varphi(U) \rightarrow \psi(W)
\end{equation*}
is a map of class  $\mathcal{C}^p$. If it is
a $\mathcal{C}^p$ diffeomorphism, then we 
say that $F$  is a \emph{$\mathcal{C}^p$ diffeomorphism between manifolds}.
We  say that $\psi \circ F \circ \varphi^{-1}$
is a representation of $F$  using a system of local coordinates given by the charts 
$(U,\varphi)$ and $(W,\psi).$
\end{definition}

\begin{definition}
Let $\mathbb{B}$ be a $\mathcal{C}^p$ manifold with atlas $\mathcal{A}=\{(U_{b}, \varphi_{b}):b \in \mathbb{B}\}$, and let $\mathbb{F}$ be a manifold. 
A $\mathcal{C}^p$  
fibre bundle $\mathbb{E}$ with base $\mathbb{B}$ and typical fibre $\mathbb{F}$ 
is a $\mathcal{C}^p$ manifold which is locally a product manifold, that is, there exists a  
surjective morphism $\pi: \mathbb{E} \longrightarrow \mathbb{B}$ such that for each $b \in  \mathbb{B}$ there is a $\mathcal{C}^p$  diffeomorphism between manifolds $$\chi_{b}:\pi^{-1}(U_{b}) \longrightarrow U_{b} \times \mathbb{F},$$ such that $p_{b} \circ \chi_{b} = \pi$ where $p_{b} : U_{b} \times \mathbb{F} \longrightarrow  U_{b}$ is the projection. 
For each $b \in \mathbb{B},$ $\pi^{-1}(b) = \mathbb{E}_b$ is called the fibre over $b.$ 
The $\mathcal{C}^p$ diffeomorphisms $\chi_{b}$ are called fibre
bundle charts. If $p = 0,$   $\mathbb{E},\mathbb{B}$ and $\mathbb{F}$ are only required to be topological spaces and $\{U_{b}:b \in \mathbb{B}\}$ 
 an open covering of $\mathbb{B}.$ In the case where $\mathbb{F}$ is a Lie group, we say that
$\mathbb{E}$ is a $\mathcal{C}^p$  \emph{principal bundle}, and if
$\mathbb{F}$ is a vector space, we  say that it is  a $\mathcal{C}^p$ \emph{vector bundle}.
\end{definition}

\begin{theorem}
The set $\mathcal{M}_r(\mathbb{R}^{k\times r})$ is an analytic principal
bundle with typical fibre $\mathrm{GL}_r$ and base $\mathbb{G}_r(\mathbb{R}^k)$, with a surjective 
 morphism between $\mathcal{M}_r(\mathbb{R}^{k\times r})$ and $\mathbb{G}_r(\mathbb{R}^k)$ given by  the map $\mathrm{col}_{k,r}$.
\end{theorem}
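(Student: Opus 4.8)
The plan is to verify the three ingredients in the definition of a $\mathcal{C}^\omega$ principal bundle: that $\mathrm{col}_{k,r}$ is a surjective analytic morphism, that over each neighbourhood of the base there is an analytic fibre bundle chart, and that the typical fibre $\mathrm{GL}_r$ is a Lie group (which is classical and needs no argument). First I would recall that surjectivity of $\mathrm{col}_{k,r}$ is immediate since every $r$-dimensional subspace is the column space of some full-rank $k\times r$ matrix. To see it is an analytic morphism, I would pick $Z\in\mathcal{M}_r(\mathbb{R}^{k\times r})$, use the chart $(\mathcal{V}_Z,\xi_Z)$ from Theorem~\ref{theoremB} around $Z$ and the chart $(\mathfrak{U}_Z,\varphi_Z)$ from Theorem~\ref{grassmann} around $\mathrm{col}_{k,r}(Z)$, and observe that $\mathrm{col}_{k,r}(\mathcal{V}_Z)\subset\mathfrak{U}_Z$: indeed if $W=(Z+Z_\bot X)G\in\mathcal{V}_Z$ then $\mathrm{col}_{k,r}(W)=\mathrm{col}_{k,r}(Z+Z_\bot X)=\varphi_Z^{-1}(X)\in\mathfrak{U}_Z$. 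Hence the coordinate representation is
$$
\varphi_Z\circ\mathrm{col}_{k,r}\circ\xi_Z^{-1}:\mathbb{R}^{(k-r)\times r}\times\mathrm{GL}_r\longrightarrow\mathbb{R}^{(k-r)\times r},\qquad (X,G)\mapsto X,
$$
which is just the projection onto the first factor, hence analytic. This simultaneously shows that $\mathrm{col}_{k,r}$ is a submersion.

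Next I would exhibit the fibre bundle charts. For $b=\mathrm{col}_{k,r}(Z)\in\mathbb{G}_r(\mathbb{R}^k)$, the neighbourhood $\mathfrak{U}_Z = \varphi_Z^{-1}(\mathbb{R}^{(k-r)\times r})$ from the atlas $\mathcal{A}_{k,r}$ plays the role of $U_b$, and I claim $\pi^{-1}(\mathfrak{U}_Z)=\mathcal{V}_Z$ where $\pi=\mathrm{col}_{k,r}$. The inclusion $\mathcal{V}_Z\subset\mathrm{col}_{k,r}^{-1}(\mathfrak{U}_Z)$ was just noted; conversely, if $\mathrm{col}_{k,r}(W)\in\mathfrak{U}_Z$ then $\mathrm{col}_{k,r}(W)=\mathrm{col}_{k,r}(Z+Z_\bot X)$ for some $X$, so by Proposition~\ref{geometryU} applied to $Z+Z_\bot X\in\mathcal{S}_Z$ (which has $\det(Z^T(Z+Z_\bot X))=\det(Z^TZ)\neq 0$) there is $G\in\mathrm{GL}_r$ with $W=(Z+Z_\bot X)G$, whence $\det(Z^TW)=\det(Z^TZ)\det(G)\neq 0$, i.e. $W\in\mathcal{V}_Z$. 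Then I define the fibre bundle chart
$$
\chi_Z:\pi^{-1}(\mathfrak{U}_Z)=\mathcal{V}_Z\longrightarrow\mathfrak{U}_Z\times\mathrm{GL}_r,\qquad \chi_Z(W)=\bigl(\mathrm{col}_{k,r}(W),\,Z^+W\bigr),
$$
with inverse $(\mathcal{V},G)\mapsto(Z+Z_\bot\varphi_Z(\mathcal{V}))G$. Reading this through the charts $\varphi_Z$ on $\mathfrak{U}_Z$ and $\xi_Z$ on $\mathcal{V}_Z$, the map $\chi_Z$ becomes $(X,G)\mapsto(X,G)$, the identity, so $\chi_Z$ is an analytic diffeomorphism between manifolds. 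By construction $p_Z\circ\chi_Z=\mathrm{col}_{k,r}=\pi$ where $p_Z:\mathfrak{U}_Z\times\mathrm{GL}_r\to\mathfrak{U}_Z$ is the projection. Finally the fibre over $b=\mathrm{col}_{k,r}(Z)$ is $\pi^{-1}(b)=Z\mathrm{GL}_r$, consistent with the typical fibre $\mathrm{GL}_r$.

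Since $\mathcal{M}_r(\mathbb{R}^{k\times r})$ is an analytic manifold (Theorem~\ref{theoremB}), $\mathbb{G}_r(\mathbb{R}^k)$ is an analytic manifold (Theorem~\ref{grassmann}), $\mathrm{col}_{k,r}$ is a surjective analytic morphism, each $\chi_Z$ is an analytic fibre bundle chart satisfying the compatibility $p_Z\circ\chi_Z=\pi$, and the typical fibre $\mathrm{GL}_r$ is a Lie group, all the requirements of the definition of an analytic principal bundle are met. I do not anticipate a serious obstacle here: the geometry has been set up so that everything trivialises in the charts $\xi_Z$ and $\varphi_Z$. The only point requiring a little care is the verification $\pi^{-1}(\mathfrak{U}_Z)=\mathcal{V}_Z$ — one must make sure the chart domains on total space and base are matched correctly via the same index $Z$, and that the determinant condition transfers as above; this is the step I would write out most carefully.
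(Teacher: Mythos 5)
Your proposal is correct and follows essentially the same route as the paper: local-coordinate representation of $\mathrm{col}_{k,r}$ as the projection $(X,G)\mapsto X$, the fibre bundle chart $\chi_Z(W)=(\mathrm{col}_{k,r}(W),Z^+W)$ trivialising to the identity in the charts $\xi_Z$ and $\varphi_Z$, and the compatibility $p_Z\circ\chi_Z=\mathrm{col}_{k,r}$. Your explicit verification that $\mathrm{col}_{k,r}^{-1}(\mathfrak{U}_Z)=\mathcal{V}_Z$ via Proposition~\ref{geometryU} and the determinant identity is slightly more detailed than the paper's, which asserts this equality more briefly.
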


\begin{proof}
To show that it is an analytic principal bundle, we first observe that
$$
\mathrm{col}_{k,r}:(\mathcal{M}_r(\mathbb{R}^{k\times r}),\mathcal{B}_{k,r}) \longrightarrow (\mathbb{G}_r(\mathbb{R}^k),\mathcal{A}_{k,r})
$$
is a surjective morphism. Indeed, let $Z\in \mathcal{M}_r(\mathbb{R}^{k\times r})$ and $(\mathcal{V}_Z,\xi_Z) \in \mathcal{B}_{k,r}$ and $(\mathfrak{U}_Z,\varphi_Z) \in \mathcal{A}_{k,r}$.
Noting 
that   
$\mathrm{col}_{k,r}(YG)= \mathrm{col}_{k,r}(Y)$ for all $Y\in \mathcal{S}_Z$, we obtain that 
$\mathrm{col}_{k,r}(\mathcal{V}_Z) = \mathfrak{U}_Z$.
Moreover, a representation of $\mathrm{col}_{k,r}$ 
by using a system of local coordinates given by the charts is 
$$
(\varphi_Z \circ \mathrm{col}_{k,r} \circ \xi_Z^{-1})(X,G) = X,
$$
which is clearly  an analytic map from $\mathbb{R}^{(k-r)\times r} \times \mathrm{GL}_r$
to $\mathbb{R}^{(k-r)\times r}$ such that 
$
\mathrm{col}_{k,r}^{-1}(\mathfrak{U}_{Z}) =\mathcal{V}_Z.
$
Now, a representation of the morphism
$$
\chi_Z: (\mathcal{V}_Z,\{(\mathcal{V}_Z,\xi_Z)\}) \longrightarrow  (\mathfrak{U}_{Z},\{(\mathfrak{U}_{Z},\varphi_Z)\}) \times 
(\mathrm{GL}_r,\{(\mathrm{GL}_r,id_{\mathbb{R}^{r\times r}})\}),\quad
W\mapsto (\mathrm{col}_{k,r}(W),G)
$$
using the system of local coordinates given by the charts is 
$$
((\varphi_Z \times id_{\mathbb{R}^{r\times r}}) \circ \chi_Z \circ \xi_Z^{-1})
:\mathbb{R}^{(k-r) \times r} \times \mathrm{GL}_r \longrightarrow \mathbb{R}^{(k-r) \times r} \times \mathrm{GL}_r,
$$
defined by
$$
((\varphi_Z \times id_{\mathbb{R}^{r\times r}}) \circ \chi_Z \circ \xi_Z^{-1})(X,G) = (X,G),
$$
which is clearly an analytic diffeomorphism. To conclude, consider the projection
$$
p_Z: \mathfrak{U}_{Z} \times \mathrm{GL}_r \longrightarrow \mathfrak{U}_{Z}, \quad
(\mathfrak{V},G) \mapsto \mathfrak{V},
$$
and observe that $(p_{Z} \circ \chi_{Z})(W) = \mathrm{col}_{k,r}(W)$ holds for all $W \in \mathcal{V}_Z.$ 
\end{proof}

\subsection{$\mathcal{M}_r(\mathbb{R}^{k\times r})$ as a submanifold and its tangent space}

Here, we prove that the  non-compact Stiefel 
manifold $\mathcal{M}_r(\mathbb{R}^{k\times r})$
equipped with the topology given by the atlas $\mathcal{B}_{k,r}$
is an embedded submanifold in $\mathbb{R}^{k\times r}$. For that, we have to prove that
the standard inclusion map 
$$
i:(\mathcal{M}_r(\mathbb{R}^{k\times r}),\mathcal{B}_{k,r}) \longrightarrow (\mathbb{R}^{k\times r},\{(\mathbb{R}^{k\times r},id_{\mathbb{R}^{k\times r}})\})
$$
as a morphism is an embedding.  To see this we need to recall some definitions and results.

\begin{definition}
Let $F:(\mathbb{M},\mathcal{A})\rightarrow (\mathbb{N},\mathcal{B})$ 
be a morphism between $\mathcal{C}^p$ manifolds and let $m\in
\mathbb{M}.$ We say that $F$ is an \emph{immersion at $m$} if there exists an
open neighbourhood $U_m$ of $m$ in $\mathbb{M}$ such that the restriction of $F$ to 
$U_m$ induces an isomorphism from $U_m$ onto a submanifold of $\mathbb{N}.$ We
say that $F$ is an \emph{immersion} if it is an immersion at each point of $%
\mathbb{M}.$
\end{definition}

 The next step is to recall the definition of the differential as a morphism
which gives a linear map between the tangent spaces of the manifolds (in local coordinates)
involved with the morphism.  Let us recall that for any $m \in \mathbb{M}$, we denote by $\mathbb{T}_m \mathbb{M}$ the tangent space of $\mathbb{M}$ at $m$ (in local coordinates).

\begin{definition}
Let $(\mathbb{M},\mathcal{A})$ and $(\mathbb{N},\mathcal{B})$ be two $
\mathcal{C}^{p}$ manifolds. 
Let $F:(\mathbb{M},\mathcal{A})\rightarrow (\mathbb{N},\mathcal{B})$ be a  morphism of class $
\mathcal{C}^{p}$, i.e., for any $m \in \mathbb{M}$,
\begin{equation*}
\psi \circ F\circ \varphi ^{-1}:\varphi (U)\rightarrow \psi (W)
\end{equation*}%
is a map of class $\mathcal{C}^{p}$, where $(U,\varphi )\in \mathcal{A}$
is a chart in $\mathbb{M}$ containing $m$ and $(W,\psi )\in \mathcal{B}$ is a chart in $\mathbb{N}$ containing $F(m)$. Then %
 we define 
\begin{equation*}
\mathrm{T}_{m}F:\mathbb{T}_{m}(\mathbb{M})\longrightarrow \mathbb{T}_{F(m)}(\mathbb{N}),\quad
\upsilon \mapsto  D(\psi \circ F\circ \varphi ^{-1})(\varphi (m))[\upsilon].
\end{equation*}
\end{definition}

For finite dimensional manifolds we have the following criterion for immersions (see
Theorem 3.5.7 in \cite{abraham2012}).

\begin{proposition}
\label{prop_inmersion} Let $(\mathbb{M},\mathcal{A})$ and $(\mathbb{N},\mathcal{B})$ 
be $\mathcal{C}^{p}$ manifolds. Let $$F:(\mathbb{M},\mathcal{A})\rightarrow (\mathbb{N},\mathcal{B})$$ be a $\mathcal{C}^{p}$ morphism
and $m\in \mathbb{M}.$ Then $F$ is an immersion at $m$ if and only if $\mathrm{T}_{m}F $ is injective.
\end{proposition}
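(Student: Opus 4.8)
The plan is to prove the two implications separately; the forward direction is formal, and the converse rests on the inverse function theorem in the guise of the local straightening (rank) theorem.

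First I would treat the easy direction: assume $F$ is an immersion at $m$. By definition there is an open neighbourhood $U_m$ of $m$ such that $F|_{U_m}$ induces an isomorphism of manifolds from $U_m$ onto a submanifold $\mathbb{S}=F(U_m)\subset\mathbb{N}$; let $G:\mathbb{S}\to U_m$ be the inverse isomorphism, so $G\circ F|_{U_m}=\mathrm{id}_{U_m}$. Reading this identity in local charts and using that, in local coordinates, $\mathrm{T}_m(\cdot)$ obeys the ordinary chain rule, one gets $\mathrm{T}_{F(m)}G\circ\mathrm{T}_mF=\mathrm{id}_{\mathbb{T}_m\mathbb{M}}$. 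Hence $\mathrm{T}_mF$ has a left inverse and is injective.

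For the converse, suppose $\mathrm{T}_mF$ is injective. Pick charts $(U,\varphi)\in\mathcal{A}$ around $m$ and $(W,\psi)\in\mathcal{B}$ around $F(m)$ with $F(U)\subset W$, and set $f:=\psi\circ F\circ\varphi^{-1}:\varphi(U)\to\psi(W)$, a $\mathcal{C}^p$ map between open subsets of $\mathbb{R}^a$ and $\mathbb{R}^b$, where $a=\dim\mathbb{M}$, $b=\dim\mathbb{N}$. The hypothesis says $Df(\varphi(m))$ is an injective linear map, i.e.\ has rank $a$, so some $a$ of its rows are linearly independent; after post-composing $\psi$ with a permutation of the coordinates of $\mathbb{R}^b$ (still a chart of $\mathbb{N}$) we may assume that, writing $f=(f_1,f_2)$ with $f_1$ valued in $\mathbb{R}^a$, the Jacobian $Df_1(\varphi(m))$ is invertible. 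By the inverse function theorem, $f_1$ restricts to a $\mathcal{C}^p$ diffeomorphism from a smaller neighbourhood $\Omega$ of $\varphi(m)$ onto an open set $\Omega'\subset\mathbb{R}^a$; write $h:=f_1^{-1}$. The map $\Psi:(u,v)\mapsto(u,\,v+f_2(h(u)))$ is a $\mathcal{C}^p$ diffeomorphism from $\Omega'\times\mathbb{R}^{b-a}$ onto an open subset of $\psi(W)$ containing $f(\varphi(m))$, with inverse $(u,w)\mapsto(u,\,w-f_2(h(u)))$; a direct computation gives $\Psi^{-1}\circ f=(f_1,0)$ on $\Omega$ and hence $\Psi^{-1}\circ f\circ h=(\mathrm{id}_{\Omega'},0)$, the standard inclusion $\mathbb{R}^a\hookrightarrow\mathbb{R}^b$. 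Consequently, replacing $(U,\varphi)$ by $\big(\varphi^{-1}(\Omega),\,f_1\circ\varphi\big)$ and $(W,\psi)$ by $\big(\psi^{-1}(\mathrm{Im}\,\Psi),\,\Psi^{-1}\circ\psi\big)$ — both genuine charts, since they differ from the originals by $\mathcal{C}^p$ diffeomorphisms — the morphism $F$ is represented by the standard linear inclusion. This inclusion is injective with image the embedded submanifold $\Omega'\times\{0\}$, so $F$ restricted to $U_m:=\varphi^{-1}(\Omega)$ is injective, $F(U_m)$ is a submanifold of $\mathbb{N}$, and $F|_{U_m}:U_m\to F(U_m)$ is an isomorphism of manifolds (its local representation is the identity on $\Omega'$). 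That is exactly the assertion that $F$ is an immersion at $m$.

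The main obstacle is the converse direction, and its only substantive ingredient is the inverse function theorem, used to straighten $f$ into its normal form $(x)\mapsto(x,0)$; the rest is bookkeeping with charts. The points demanding a little care are that the straightened ``charts'' really belong to the atlases $\mathcal{A}$ and $\mathcal{B}$ in the sense of \S\ref{subsec:elements_geometry} (which holds because they differ from the given ones by $\mathcal{C}^p$ diffeomorphisms), and that ``submanifold'' is taken in the embedded sense, so that $F|_{U_m}$ onto its image is genuinely an isomorphism of manifolds rather than merely a smooth bijection.
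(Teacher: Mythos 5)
Your proof is correct, but note that the paper does not prove this proposition at all: it is quoted verbatim from the literature (Theorem 3.5.7 of Abraham--Marsden--Ratiu \cite{abraham2012}) and used as a black box. What you have written is the standard self-contained proof of the local immersion theorem: the forward direction by differentiating a left inverse, the converse by splitting $f=(f_1,f_2)$ with $Df_1(\varphi(m))$ invertible, applying the inverse function theorem to $f_1$, and straightening with the shear $\Psi(u,v)=(u,\,v+f_2(h(u)))$ so that $F$ is represented by $u\mapsto(u,0)$. This is the same argument as in the cited reference, so nothing is gained or lost mathematically; what your version buys is that the paper's geometric framework (charts in the sense of Section \ref{subsec:elements_geometry}, submanifolds in the embedded sense) is verified explicitly rather than imported, which is worth doing since the paper's definition of ``immersion at $m$'' is phrased via an isomorphism onto a submanifold rather than via the rank of the differential.

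One small technical slip to repair: $\Psi$ is a diffeomorphism of $\Omega'\times\mathbb{R}^{b-a}$ onto itself (for fixed $u$ the map $v\mapsto v+f_2(h(u))$ is a bijection of $\mathbb{R}^{b-a}$), so its image is $\Omega'\times\mathbb{R}^{b-a}$, which need not be contained in $\psi(W)$. The new chart on $\mathbb{N}$ should therefore be taken on $W':=\psi^{-1}\bigl(\psi(W)\cap(\Omega'\times\mathbb{R}^{b-a})\bigr)$ with coordinate map $\Psi^{-1}\circ\psi$; since $f(\Omega)\subset\psi(W)\cap\mathrm{Im}\,\Psi$, this still contains $F(\varphi^{-1}(\Omega))$ and the rest of your argument goes through unchanged. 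You may also want to state explicitly, in the forward direction, that $\mathrm{T}_mF$ takes values in $\mathbb{T}_{F(m)}\mathbb{S}\subset\mathbb{T}_{F(m)}\mathbb{N}$ so that the composition $\mathrm{T}_{F(m)}G\circ\mathrm{T}_mF$ is well defined; injectivity of $\mathrm{T}_mF$ is then chart-independent because a change of charts only composes it with linear isomorphisms.
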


A concept related to an immersion between manifolds is given in the
following definition.

\begin{definition}
Let $(\mathbb{M},\mathcal{A})$ and $(\mathbb{N},\mathcal{B})$ 
be $\mathcal{C}^{p}$ manifolds and let $f:(\mathbb{M},\mathcal{A})\longrightarrow (\mathbb{N},\mathcal{B})$
be a $\mathcal{C}^{p}$ morphism. If $f$ is an injective immersion, then $%
f(\mathbb{M}) $ is called an \emph{immersed submanifold of $\mathbb{N}$.}
\end{definition}

Finally, we give the definition of embedding.

\begin{definition}
Let $(\mathbb{M},\mathcal{A})$ and $(\mathbb{N},\mathcal{B})$ 
be $\mathcal{C}^{p}$ manifolds and let $f:(\mathbb{M},\mathcal{A})\longrightarrow (\mathbb{N},\mathcal{B})$
be a $\mathcal{C}^{p}$ morphism. If $f$ is an injective immersion, and 
$f:(\mathbb{M},\tau_{\mathcal{A}}) \longrightarrow (f(\mathbb{M}),\tau_{\mathcal{B}}|_{f(\mathbb{M})})$
is a topological homeomorphism, then we say that $f$ is an \emph{embedding} and
$f(\mathbb{M})$ is called an \emph{embedded submanifold of $\mathbb{N}$.}
\end{definition}

We first note that the representation of the inclusion map $i$  using the system of local coordinates given by the charts $(\mathcal{V}_Z,\xi_Z) \in \mathcal{B}_{k,r}$
in $\mathcal{M}_r(\mathbb{R}^{k\times r})$ and $(\mathbb{R}^{k\times r},id_{\mathbb{R}^{k\times r}})$ in $\mathbb{R}^{k\times r}$
is  
$$
 (id_{\mathbb{R}^{k\times r}} \circ i \circ \xi_{Z}^{-1}) = (i \circ \xi_{Z}^{-1}) :\mathbb{R}^{(k-r)\times r} \times \mathrm{GL}_r \rightarrow \mathbb{R}^{k\times r}, \quad (X,G) \mapsto (Z+Z_{\bot}X)G.
$$
Then the tangent map $T_Z i$ at $Z= \xi_Z^{-1}(0,id_r)$, defined by $T_Z i = D (i \circ \xi_{Z}^{-1})(0,id_r)$, is   
\begin{align*}
T_Z i : \mathbb{R}^{(k-r)\times r} \times \mathbb{R}^{r\times r} \to \mathbb{R}^{k\times r},\quad 
 (\dot X,\dot G) \mapsto Z_{\bot} \dot X  + Z \dot G.
\end{align*}

\begin{proposition}\label{fullrank-submanifold}
The tangent map $\mathrm{T}_{Z}i:\mathbb{R}^{(k-r)\times r} \times \mathbb{R}^{r \times r} \rightarrow \mathbb{R}^{k\times r}$ at $Z \in \mathcal{M}_r(\mathbb{R}^{k\times r})$ is a linear isomorphism, with inverse  $(\mathrm{T}_{Z}i)^{-1}$ given by 
  
$$(\mathrm{T}_{Z}i)^{-1}(\dot  Z) = (Z_{\bot}^+\dot  Z,Z^+ \dot  Z),$$
for $\dot  Z \in  \mathbb{R}^{k\times r}$.
Furthermore, the standard inclusion map $i$  is an embedding from $\mathcal{M}_r(\mathbb{R}^{k\times r})$ to $\mathbb{R}^{k\times r}.$
\end{proposition}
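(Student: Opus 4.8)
The plan is to verify the two assertions in sequence, the second following from the first together with a soft topological argument. First I would check that the displayed formula for $(\mathrm{T}_Z i)^{-1}$ is correct, which is a one-line computation: applying the candidate inverse to $\mathrm{T}_Z i(\dot X,\dot G) = Z_\bot \dot X + Z\dot G$ and using the four identities $Z^+Z = id_r$, $Z^+Z_\bot = 0$, $Z_\bot^+ Z_\bot = id_{k-r}$ and $Z_\bot^+ Z = 0$ recovers $(\dot X,\dot G)$; conversely, applying $\mathrm{T}_Z i$ to $(Z_\bot^+\dot Z, Z^+\dot Z)$ gives $Z_\bot Z_\bot^+\dot Z + Z Z^+\dot Z$, which equals $\dot Z$ because $ZZ^+$ and $Z_\bot Z_\bot^+$ are the complementary orthogonal projections onto $\mathrm{col}_{k,r}(Z)$ and its orthogonal complement $\mathrm{col}_{k,k-r}(Z_\bot)$, so their sum is $id_k$. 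Hence $\mathrm{T}_Z i$ is a linear isomorphism, and by Proposition~\ref{prop_inmersion} the inclusion $i$ is an immersion at $Z$; since $Z$ was arbitrary in $\mathcal{M}_r(\mathbb{R}^{k\times r})$, $i$ is an immersion.

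Next, $i$ is obviously injective (it is literally the set inclusion), so $i(\mathcal{M}_r(\mathbb{R}^{k\times r})) = \mathcal{M}_r(\mathbb{R}^{k\times r})$ is an immersed submanifold of $\mathbb{R}^{k\times r}$. To upgrade this to an embedding it remains to show that $i$ is a homeomorphism onto its image when the image carries the subspace topology $\tau_{\mathbb{R}^{k\times r}}|_{\mathcal{M}_r(\mathbb{R}^{k\times r})}$, i.e.\ that the atlas topology $\tau_{\mathcal{B}_{k,r}}$ coincides with the subspace topology. This is where the work sits, but it is light: the proof of Theorem~\ref{theoremB} already established that each $\mathcal{V}_Z$ is open in $(\mathbb{R}^{k\times r},\tau_{\mathbb{R}^{k\times r}})$ and that $\xi_Z^{-1}$ is continuous from $\mathbb{R}^{(k-r)\times r}\times\mathrm{GL}_r$ into $\mathbb{R}^{k\times r}$; since the charts $\xi_Z$ are bijections onto $\mathbb{R}^{(k-r)\times r}\times\mathrm{GL}_r$, this shows $\tau_{\mathcal{B}_{k,r}}\subseteq \tau_{\mathbb{R}^{k\times r}}|_{\mathcal{M}_r(\mathbb{R}^{k\times r})}$. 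For the reverse inclusion one shows $\xi_Z$ itself is continuous for the subspace topology, which is immediate from the explicit formula $\xi_Z(W) = (Z_\bot^+ W (Z^+W)^{-1}, Z^+W)$: each component is a composition of the (continuous) linear maps $W\mapsto Z^+W$, $W\mapsto Z_\bot^+W$ with matrix multiplication and the inversion map $\mathrm{GL}_r\to\mathrm{GL}_r$, all continuous, and the composition is defined precisely on the open set $\mathcal{V}_Z$. Thus $\xi_Z$ is a homeomorphism for the subspace topology, the two topologies agree, and $i$ is an embedding.

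I would present the argument essentially in that order: (1) the algebraic verification of the inverse formula, invoking the projection identities $ZZ^+ + Z_\bot Z_\bot^+ = id_k$; (2) citing Proposition~\ref{prop_inmersion} to conclude $i$ is an immersion; (3) noting injectivity is trivial; (4) the homeomorphism check, reusing the openness of $\mathcal{V}_Z$ and the continuity of $\xi_Z^{\pm 1}$ from the proof of Theorem~\ref{theoremB}. The only point demanding a little care — the ``main obstacle,'' such as it is — is making explicit that $\tau_{\mathcal{B}_{k,r}}$ equals $\tau_{\mathbb{R}^{k\times r}}|_{\mathcal{M}_r(\mathbb{R}^{k\times r})}$ rather than just being coarser; but because every chart domain $\mathcal{V}_Z$ is genuinely open in the ambient matrix space and both $\xi_Z$ and $\xi_Z^{-1}$ are continuous there, this reduces to the observation that a bijection continuous in both directions between the two topologies forces their equality on $\mathcal{M}_r(\mathbb{R}^{k\times r})$. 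No $\tau^*$ subtlety intervenes at this level, since we are comparing with the ordinary norm topology on $\mathbb{R}^{k\times r}$.
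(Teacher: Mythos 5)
Your proof is correct, and the first half (the inverse formula, injectivity/surjectivity of $\mathrm{T}_Z i$ via the identities $Z^+Z=id_r$, $Z^+Z_\bot=0$ and $ZZ^+ + Z_\bot Z_\bot^+=id_k$, then Proposition~\ref{prop_inmersion}) matches the paper's argument essentially verbatim. Where you genuinely diverge is the homeomorphism step. The paper does \emph{not} argue via continuity of $\xi_Z$ directly: it computes the differential $D(i\circ\xi_Z^{-1})(X,G)[(\dot X,\dot G)]=Z_\bot\dot X G+(Z+Z_\bot X)\dot G$ at an arbitrary point, checks it is injective (hence a linear isomorphism by dimension count), and invokes the inverse function theorem to conclude that the bijection $i\circ\xi_Z^{-1}$ is a diffeomorphism, in particular a homeomorphism onto the open set $\mathcal{V}_Z$. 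You instead read off continuity of $\xi_Z$ for the norm topology from the closed-form expression $\xi_Z(W)=(Z_\bot^+W(Z^+W)^{-1},Z^+W)$ (which the paper does record before the proposition), pairing it with the continuity of $\xi_Z^{-1}$ and the openness of $\mathcal{V}_Z$ already used in Theorem~\ref{theoremB}. Your route is more elementary — no inverse function theorem, no pointwise differential — and it delivers the stronger and cleaner statement that $\tau_{\mathcal{B}_{k,r}}$ actually \emph{equals} the subspace topology, which the paper leaves implicit; the paper's route, in exchange, shows $i\circ\xi_Z^{-1}$ is a diffeomorphism rather than merely a homeomorphism. One small caution: you attach the two continuity facts to the wrong inclusions — continuity of $\xi_Z^{-1}$ (openness of $\xi_Z$) gives $\tau_{\mathbb{R}^{k\times r}}|_{\mathcal{M}_r(\mathbb{R}^{k\times r})}\subseteq\tau_{\mathcal{B}_{k,r}}$, while continuity of $\xi_Z$ gives the inclusion $\tau_{\mathcal{B}_{k,r}}\subseteq\tau_{\mathbb{R}^{k\times r}}|_{\mathcal{M}_r(\mathbb{R}^{k\times r})}$ — but since you establish both facts, the conclusion that the topologies coincide stands.
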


\begin{proof}
Let us assume that
$
\mathrm{T}_{Z}i(\dot  X,\dot  G) = Z_{\bot}\dot  X + Z\dot  G = 0.
$
Multiplying this equality by $Z^+$ and $Z_\bot^+$ on the left, we obtain $\dot G=0$ and $\dot X=0$ respectively, which implies that  
$\mathrm{T}_{Z}i$ is injective. To prove that it is
also surjective, we consider a matrix $\dot Z \in \mathbb{R}^{k \times r}$ and observe that $\dot X = Z_{\bot}^+\dot  Z \in \mathbb{R}^{(k-r)\times r} $ and $\dot G = Z^+ \dot  Z \in  \mathbb{R}^{r \times r}$ is such that 
$\mathrm{T}_{Z}i(\dot  X,\dot  G) = \dot Z$.
Since $\mathrm{T}_{Z}i$ is injective, the inclusion map $i$ is an immersion.\\
To prove that it is an
embedding we equip $\mathcal{M}_r(\mathbb{R}^{k\times r})$ with the topology $\tau_{\mathcal{B}_{k,r}}$ given by the atlas  and we equip $\mathbb{R}^{k \times r}$  with the topology $\tau_{\mathbb{R}^{k\times r}}$ induced by matrix norms.
We need to check that 
$$
i:(\mathcal{M}_{r}(\mathbb{R}^{k\times r}),\tau_{\mathcal{B}_{k,r}})
\longrightarrow (\mathcal{M}_{r}(\mathbb{R}^{k\times r}),\tau_{\mathbb{R}^{k\times r}}|_{\mathcal{M}_{r}(\mathbb{R}^{k\times r})})
$$
is a topological homeomorphism. Since the topology in $(\mathcal{M}_{r}(\mathbb{R}^{k\times r}),\tau_{\mathcal{B}_{k,r}})$
has the property that each local chart $\xi_Z$ is indeed a homeomorphism from $\mathcal{V}_Z$ in $\mathcal{M}_{r}(\mathbb{R}^{k\times r})$ to $\xi_Z(\mathcal{V}_Z) = \mathbb{R}^{(k-r)\times r} \times \mathrm{GL}_r$ (see Section \ref{subsec:elements_geometry}), we only need to show that the bijection
$(i \circ \xi_{Z}^{-1}):\mathbb{R}^{(k-r)\times r} \times \mathrm{GL}_r \rightarrow \mathcal{V}_Z \subset \mathbb{R}^{k\times r}$ given by
$$(i \circ \xi_{Z}^{-1})(X,G) = (Z+Z_{\bot}X)G$$ 
is a topological homeomorphism for all $Z \in \mathcal{M}_{r}(\mathbb{R}^{k\times r}).$ 
Observe that $D(i \circ \xi_{Z}^{-1})(X,G) \in \mathcal{L}(\mathbb{R}^{(k-r)\times r} \times \mathbb{R}^{r \times r},\mathbb{R}^{k\times r})$ is given by
$$
D(i \circ \xi_{Z}^{-1})(X,G)[(\dot{X},\dot{G})] = Z_{\bot}\dot{X}G + (Z+Z_{\bot}X)\dot{G}.
$$
Assume that $Z_{\bot}\dot{X}G + (Z+Z_{\bot}X)\dot{G} = 0.$ Multiplying 
this equality by $Z^+$ on the left we obtain $\dot{G} = 0,$ and hence
$Z_{\bot}\dot{X}G = 0.$ Multiplying by $Z_{\bot}^+$ on the left 
we obtain $\dot{X} G= 0.$ Thus $\dot{X}=0$ and 
as a consequence $D(i \circ \xi_{Z}^{-1})(X,G)$ is a linear isomorphism 
for each $(X,G) \in \mathbb{R}^{(k-r)\times r} \times \mathrm{GL}_r.$ The inverse function
theorem says us that $(i \circ \xi_{Z}^{-1})$ is a diffeomorphism, in particular a homeomorphism,
and hence $i$ is an embedding.
\end{proof}

The tangent space to $\mathcal{M}_r(\mathbb{R}^{k\times r})$ at $Z$ is the image through  $T_Z i $ of the tangent space at $Z$ in local coordinates $\mathbb{T}_Z \mathcal{M}_r(\mathbb{R}^{k\times r})  = \mathbb{R}^{(k-r)\times r} \times  \mathbb{R}^{r\times r}$, i.e. 
$${T}_Z \mathcal{M}_r(\mathbb{R}^{k\times r}) =  \{Z_{\bot} \dot X  + Z \dot G :\dot X \in \mathbb{R}^{(k-r)\times r} ,\dot G \in  \mathbb{R}^{r\times r}  \} =  \mathbb{R}^{k\times r},$$
and can be decomposed into a vertical tangent space 
$$
T_Z^V\mathcal{M}_r(\mathbb{R}^{k\times r}) = \{  Z \dot G :\dot G \in \mathbb{R}^{r\times r} \},
$$
and an horizontal tangent space
$$
T^H_Z\mathcal{M}_r(\mathbb{R}^{k\times r}) = \{Z_{\bot} \dot X    :\dot X \in \mathbb{R}^{(k-r)\times r}  \}.
$$

\subsection{Lie group structure of neighbourhoods $\mathcal{V}_Z$}

We here prove that each neighbourhood $\mathcal{V}_Z$ of $\mathcal{M}_{r}(\mathbb{R}^{k\times r})$ has the structure of a Lie group. For that, we first note that 
$\mathcal{V}_Z$ can be identified with 
$\mathcal{S}_{Z} \times \mathrm{GL}_r$, with $\mathcal{S}_{Z}$ given by \eqref{SZLiegroup}. Noting that $\mathcal{S}_{Z}$ can be identified with the Lie group $\mathcal{G}_Z$ defined in \eqref{GZ}, we then have that $\mathcal{V}_Z$ can be identified with a product of two Lie groups $\mathcal{G}_Z \times \mathrm{GL}_r$, which is a Lie group with the  
group operation $\odot_Z$ given by
$$
(\exp(Z_{\bot}XZ^+),G) \odot_Z (\exp(Z_{\bot} X'Z^+), G') = (\exp(Z_{\bot}(X+ X')Z^+),G G'), 
$$
for $X, X'\in \mathbb{R}^{(k-r)\times r}$ and $G, G'\in \mathrm{GL}_r$. 
It allows us to define a group operation $\star_Z$ over $\mathcal{V}_Z$ defined 
for $W =\xi_{Z}^{-1}(X,G) $ and $ W' =  \xi_{Z}^{-1}( X', G') $ by
\begin{align}
W \star_Z  W' = \xi_{Z}^{-1}(X+ X',G G'),\label{group operation VZ}
\end{align}
and to state the following result. 

\begin{theorem}
The set $\mathcal{V}_Z$ together with the group operation $\star_Z$ defined by \eqref{group operation VZ} 
is a Lie group and the map
$
\eta_{Z}:\mathcal{V}_Z \longrightarrow \mathcal{G}_{Z} \times \mathrm{GL}_r
$
given by
$$
\eta_{Z}(\xi_{Z}^{-1}(X,G))= (\exp(Z_{\bot}XZ^+),G)
$$
is a Lie group isomorphism.
\end{theorem}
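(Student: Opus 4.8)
The plan is to transport the Lie group structure from the product $\mathcal{G}_Z \times \mathrm{GL}_r$, which is already known to be a Lie group as a direct product of two Lie groups, to the set $\mathcal{V}_Z$ via the bijection $\eta_Z$, and then verify that the resulting operation is precisely $\star_Z$ and that $\eta_Z$ is a Lie group isomorphism. Concretely, recall that the chart $\xi_Z : \mathcal{V}_Z \to \mathbb{R}^{(k-r)\times r} \times \mathrm{GL}_r$ is an analytic diffeomorphism by Theorem~\ref{theoremB}, and that by Proposition~\ref{algebra} (in particular \eqref{exp} and \eqref{exp1}) the map $X \mapsto \exp(Z_\bot X Z^+)$ is an analytic bijection from $\mathbb{R}^{(k-r)\times r}$ onto $\mathcal{G}_Z$ with analytic inverse $\exp(Z_\bot X Z^+) \mapsto Z_\bot^+(\exp(Z_\bot X Z^+)Z - Z)Z$. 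Hence the composite $\eta_Z = (\,\exp(Z_\bot (\cdot) Z^+) \times id_{\mathrm{GL}_r}\,) \circ \xi_Z$ is an analytic diffeomorphism from $\mathcal{V}_Z$ onto $\mathcal{G}_Z \times \mathrm{GL}_r$.

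First I would make $\mathcal{V}_Z$ into a group by declaring $\eta_Z$ to be an isomorphism of groups, i.e.\ defining $W \star_Z W' := \eta_Z^{-1}\big(\eta_Z(W)\odot_Z \eta_Z(W')\big)$. Writing $W = \xi_Z^{-1}(X,G)$ and $W' = \xi_Z^{-1}(X',G')$, we have $\eta_Z(W) = (\exp(Z_\bot X Z^+),G)$ and $\eta_Z(W') = (\exp(Z_\bot X' Z^+),G')$, so by the definition of $\odot_Z$ and the identity $\exp(Z_\bot X Z^+)\exp(Z_\bot X' Z^+) = \exp(Z_\bot(X+X')Z^+)$ from Proposition~\ref{algebra} (which follows from commutativity and nilpotency of $\mathfrak{g}_Z$), the product in $\mathcal{G}_Z\times\mathrm{GL}_r$ equals $(\exp(Z_\bot(X+X')Z^+), GG')$; applying $\eta_Z^{-1}$ gives exactly $\xi_Z^{-1}(X+X', GG')$, which is the definition \eqref{group operation VZ} of $\star_Z$. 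This shows $\star_Z$ is well-defined and agrees with the transported operation, so $(\mathcal{V}_Z,\star_Z)$ is an abstract group with identity $\xi_Z^{-1}(0,id_r) = Z$ and inverse $\xi_Z^{-1}(X,G)^{-1} = \xi_Z^{-1}(-X,G^{-1})$.

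Next I would check the smoothness (analyticity) of multiplication and inversion on $\mathcal{V}_Z$, so that it is genuinely a Lie group and not merely a topological group. Since $\mathcal{G}_Z\times\mathrm{GL}_r$ is a Lie group, its multiplication $\mu_\odot$ and inversion $\iota_\odot$ are analytic; and since $\eta_Z$ and $\eta_Z^{-1}$ are analytic diffeomorphisms, the transported maps $\mu_{\star_Z} = \eta_Z^{-1}\circ \mu_\odot \circ (\eta_Z\times\eta_Z)$ and $\iota_{\star_Z} = \eta_Z^{-1}\circ \iota_\odot \circ \eta_Z$ are analytic as compositions of analytic maps. Equivalently, one can read this off directly from the local representation: in the chart $\xi_Z$, multiplication is $((X,G),(X',G'))\mapsto (X+X', GG')$ and inversion is $(X,G)\mapsto(-X,G^{-1})$, both manifestly analytic on $\mathbb{R}^{(k-r)\times r}\times\mathrm{GL}_r$. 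Therefore $(\mathcal{V}_Z,\star_Z)$ is an analytic Lie group, and by construction $\eta_Z:(\mathcal{V}_Z,\star_Z)\to(\mathcal{G}_Z\times\mathrm{GL}_r,\odot_Z)$ is a bijective group homomorphism that is an analytic diffeomorphism, hence a Lie group isomorphism.

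I do not anticipate a serious obstacle here: the entire content is the observation that all the needed structural facts — analyticity of $\xi_Z$ (Theorem~\ref{theoremB}), the explicit exponential identities and commutativity of $\mathfrak{g}_Z$ (Proposition~\ref{algebra}), and the Lie group structure of $\mathcal{G}_Z$ (the preceding lemmas and theorems of Section~\ref{sec:liegroupSZ}) — have already been established, so the proof is essentially a transport-of-structure argument. The only point that requires a moment's care is confirming that the group law $\odot_Z$ on $\mathcal{G}_Z\times\mathrm{GL}_r$ pulls back under $\eta_Z$ to exactly the stated $\star_Z$ rather than to some conjugated or twisted version; this is precisely where the identity $\exp(Z_\bot X Z^+)\exp(Z_\bot X' Z^+)=\exp(Z_\bot(X+X')Z^+)$ is used, and it goes through cleanly because $\mathfrak{g}_Z$ is commutative.
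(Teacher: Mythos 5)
Your proof is correct and takes essentially the same route as the paper: both identify $\mathcal{V}_Z$ with the product Lie group $\mathcal{G}_Z\times\mathrm{GL}_r$ via $\eta_Z$, check that the product operation $\odot_Z$ pulls back to $\star_Z$ using $\exp(Z_\bot XZ^+)\exp(Z_\bot X'Z^+)=\exp(Z_\bot(X+X')Z^+)$, and read off analyticity of multiplication and inversion in the chart $\xi_Z$. The only blemish is a typo in your stated inverse of $X\mapsto\exp(Z_\bot XZ^+)$: it should be $E\mapsto Z_\bot^+(EZ-Z)$ (equivalently $Z_\bot^+(E-id_k)Z$), without the trailing factor of $Z$, which is dimensionally inconsistent; this does not affect the argument.
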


 
\section{The principal bundle $\mathcal{M}_r(\mathbb{R}^{n\times m})$ for $0 < r < \min(m,n)$}\label{sec:fixed-rank}

In this section, we give a geometric description of the set of matrices $\mathcal{M}_r(\mathbb{R}^{n\times m})$ with rank $r<\min(m,n)$.
 
\subsection{$\mathcal{M}_r(\mathbb{R}^{n\times m})$ as a principal bundle}

For $Z\in \mathcal{M}_r(\mathbb{R}^{n\times m})$, there exists 
$U \in \mathcal{M}_r(\mathbb{R}^{n\times r}),$ $V \in \mathcal{M}_r(\mathbb{R}^{m\times r}),$
and $G \in \mathrm{GL}_r$ such that
$$
Z = UGV^T,
$$
where the column space of $Z$ is $\mathrm{col}_{n,r}(U)$ and the row space of $Z$ is $\mathrm{col}_{m,r}(V).$

Let us first introduce the surjective map
$$
\varrho_{r}: \mathcal{M}_r(\mathbb{R}^{n \times m}) \longrightarrow
\mathbb{G}_r(\mathbb{R}^n) \times \mathbb{G}_r(\mathbb{R}^m), \quad UGV^T \mapsto (\mathrm{col}_{n,r}(U),\mathrm{col}_{m,r}\,(V)).
$$
The set
$$
\varrho_{r}^{-1}(\mathrm{col}_{n,r}(U),\mathrm{col}_{m,r}\,(V))=
\{UHV^T: H \in \mathrm{GL}_r\} 
$$
can be identified with $\mathrm{GL}_r$. Let us consider 
$U_{\bot} \in \mathcal{M}_{n-r}(\mathbb{R}^{n\times (n-r)})$ such that $U^T \, U_{\bot} = 0$ and
$V_{\bot} \in \mathcal{M}_{m-r}(\mathbb{R}^{m\times (m-r)})$  such that $V^T \, V_{\bot} = 0.$  Then we define a neighbourhood of $UGV^T$ 
in the set $\mathcal{M}_r(\mathbb{R}^{n \times m})$ by $$\mathcal{U}_Z:=
\varrho_r^{-1}(\mathfrak{U}_U \times \mathfrak{U}_V),$$
where $\mathfrak{U}_U$ and $\mathfrak{U}_V$ are the neighbourhoods of  $\mathrm{col}_{n,r}(U)$ and $\mathrm{col}_{m,r}(V)$ respectively (see Section \ref{sec:liegroupSZ}). 
Noting that $\mathfrak{U}_U = \varphi^{-1}_U(\mathbb{R}^{(n-r)\times r}) = \mathrm{col}_{n,r}(\mathcal{S}_U)$ and $\mathfrak{U}_V = \varphi_V^{-1}(\mathbb{R}^{(m-r)\times r}) = \mathrm{col}_{m,r}(\mathcal{S}_V)$, where $\mathcal{S}_U$ and $\mathcal{S}_V$ are the affine cross sections of $U$ and $V$ respectively (defined by \eqref{SZ}), the neighbourhood of $UGV^T$ can be written
$$
\mathcal{U}_Z=\{
(U+U_{\bot}X)H(V+V_{\bot}Y)^T: (X,Y,H) \in \mathbb{R}^{(n-r)\times r} \times \mathbb{R}^{(m-r)\times r} \times \mathrm{GL}_r
\}.
$$
We can associate to $\mathcal{U}_Z$ the parametrisation $\theta_Z^{-1}$ given by the chart  (see Figure \ref{Mnxm})
$$
\theta_Z:  \mathcal{U}_Z \rightarrow \mathbb{R}^{(n-r)\times r} \times \mathbb{R}^{(m-r)\times r} \times \mathrm{GL}_r
$$
defined by 
$$
\theta_Z^{-1}(X,Y,H) = (U+U_{\bot}X)H(V+V_{\bot}Y)^T 
$$
for $(X,Y,H)\in  \mathbb{R}^{(n-r)\times r} \times \mathbb{R}^{(m-r)\times r} \times \mathrm{GL}_r$, and 
$$
\theta_Z(A) = (U_\bot^+A(V^+)^T(U^+A(V^+)^T)^{-1},V_\bot^+A^T(U^+)^T(V^+A^T(U^+)^T)^{-1},U^+A(V^+)^T)
$$
for $A\in \mathcal{U}_Z$.
In particular, we have $\theta_Z^{-1}(0,0,G) = Z.$ We  point out that 
$\mathcal{U}_Z = \mathcal{U}_{Z'}$ and $\theta_Z=\theta_{Z'}$ for every $Z'=UG'V^T$ with $G'\neq G.$

\begin{figure}[h]
\centering
\includegraphics[scale=1]{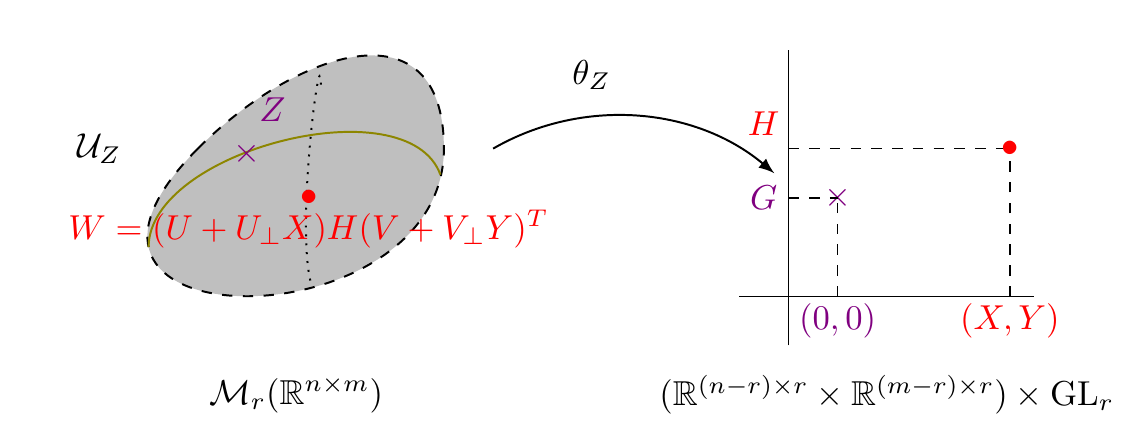}
\caption{Illustration of the chart $\theta_Z$ which associates to $W=  (U+U_{\bot}X)H(V+V_{\bot}Y)^T \in \mathcal{U}_Z \subset \mathcal{M}_{r}(\mathbb{R}^{n\times m})$, the parameters $(X,Y,G)$ in $\mathbb{R}^{(n-r) \times r}\times \mathbb{R}^{(m-r) \times r} \times \mathrm{GL}_r$.}\label{Mnxm}
\end{figure}

\begin{theorem}
The collection $\mathcal{B}_{n,m,r}:=\{(\mathcal{U}_Z,\theta_Z): Z \in  \mathcal{M}_r(\mathbb{R}^{n \times m})\}$
 is
an analytic atlas for $\mathcal{M}_r(\mathbb{R}^{n \times m})$ and hence $(\mathcal{M}_r(\mathbb{R}^{n \times m}),\mathcal{B}_{n,m,r})$ 
is an analytic $r(n+m-r)$-dimensional manifold modelled on $\mathbb{R}^{(n-r) \times r}\times \mathbb{R}^{(m-r) \times r} \times \mathbb{R}^{r \times r}.$
\end{theorem}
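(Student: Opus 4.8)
The plan is to mimic closely the proof of Theorem~\ref{theoremB} (the analogous statement for $\mathcal{M}_r(\mathbb{R}^{k\times r})$), since the chart $\theta_Z$ is essentially a "two-sided" version of $\xi_Z$. First I would check that $\{\mathcal{U}_Z\}_{Z\in\mathcal{M}_r(\mathbb{R}^{n\times m})}$ is a covering: given any $A\in\mathcal{M}_r(\mathbb{R}^{n\times m})$, write $A=UGV^T$ in the form \eqref{matrix_representation}; then $A=\theta_A^{-1}(0,0,G)\in\mathcal{U}_A$ by the displayed identity $\theta_Z^{-1}(0,0,G)=Z$. Next I would verify that each $\theta_Z$ is a well-defined bijection onto $\mathbb{R}^{(n-r)\times r}\times\mathbb{R}^{(m-r)\times r}\times\mathrm{GL}_r$: surjectivity is the very definition of $\mathcal{U}_Z$, and injectivity follows from Proposition~\ref{geometryU} applied twice — once in $\mathbb{R}^n$ to the column data and once in $\mathbb{R}^m$ to the row data — which shows that from $A\in\mathcal{U}_Z$ one recovers $X$, $Y$, $H$ uniquely via the stated formula for $\theta_Z(A)$. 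Here one uses $U^+U_\bot=0$, $U^+U=id_r$, $V^+V_\bot=0$, $V^+V=id_r$, and the fact that $\det(U^TW)\neq 0$, $\det(V^TW')\neq 0$ on the relevant cross sections.

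The substance of the proof is checking the two compatibility conditions (i) and (ii) for two charts $\theta_Z$, $\theta_{\tilde Z}$ with $\mathcal{U}_Z\cap\mathcal{U}_{\tilde Z}\neq\emptyset$, where $Z=UGV^T$ and $\tilde Z=\tilde U\tilde G\tilde V^T$. As in Theorem~\ref{theoremB}, I would temporarily equip $\mathbb{R}^{n\times m}$ with the norm topology $\tau_{\mathbb{R}^{n\times m}}$ and observe that $\mathcal{U}_Z=\varrho_r^{-1}(\mathfrak{U}_U\times\mathfrak{U}_V)$ is an open subset of $\mathcal{M}_r(\mathbb{R}^{n\times m})$: indeed $\mathcal{U}_Z=\{W\in\mathcal{M}_r(\mathbb{R}^{n\times m}):\det(U^TW V)\neq 0,\ \det(U^TW\,?)\ \ldots\}$, more precisely the condition that the column space of $W$ lies in $\mathfrak{U}_U$ and its row space in $\mathfrak{U}_V$ is an open condition expressible through non-vanishing determinants of continuous functions of $W$. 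Consequently $\mathcal{U}_Z\cap\mathcal{U}_{\tilde Z}$ is open in $\mathbb{R}^{n\times m}$, and since $\theta_Z^{-1}$ is continuous (indeed analytic) from $\mathbb{R}^{(n-r)\times r}\times\mathbb{R}^{(m-r)\times r}\times\mathrm{GL}_r$ into $\mathbb{R}^{n\times m}$, the preimage $\theta_Z(\mathcal{U}_Z\cap\mathcal{U}_{\tilde Z})$ is open, and similarly for $\tilde Z$; this gives (i).

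For (ii) I would simply substitute: for $(X,Y,H)$ with $\theta_Z^{-1}(X,Y,H)=(U+U_\bot X)H(V+V_\bot Y)^T\in\mathcal{U}_Z\cap\mathcal{U}_{\tilde Z}$, compose with the explicit formula for $\theta_{\tilde Z}$ to get
$$
\theta_{\tilde Z}\circ\theta_Z^{-1}(X,Y,H)=\bigl(\tilde U_\bot^+ A(\tilde V^+)^T(\tilde U^+A(\tilde V^+)^T)^{-1},\ \tilde V_\bot^+ A^T(\tilde U^+)^T(\tilde V^+A^T(\tilde U^+)^T)^{-1},\ \tilde U^+A(\tilde V^+)^T\bigr)
$$
with $A=(U+U_\bot X)H(V+V_\bot Y)^T$. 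Each entry is a composition of polynomial maps in $(X,Y,H)$ with the matrix inversion map, which is analytic on $\mathrm{GL}_r$; the two matrices being inverted are invertible precisely because $A\in\mathcal{U}_{\tilde Z}$ forces $\det(\tilde U^T A)\neq 0$ and $\det(\tilde V^T A^T)\neq 0$ (again Proposition~\ref{geometryU} in each factor). Hence $\theta_{\tilde Z}\circ\theta_Z^{-1}$ is analytic, and by symmetry so is its inverse, so it is an analytic diffeomorphism. This establishes that $\mathcal{B}_{n,m,r}$ is an analytic atlas. The dimension count $N=(n-r)r+(m-r)r+r^2=r(n+m-r)$ and the model space $\mathbb{R}^{(n-r)\times r}\times\mathbb{R}^{(m-r)\times r}\times\mathbb{R}^{r\times r}$ are then immediate from the common target of the charts. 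The main (though still routine) obstacle is bookkeeping the pseudo-inverse identities to confirm that the two determinants appearing in $\theta_{\tilde Z}$ do not vanish on $\mathcal{U}_Z\cap\mathcal{U}_{\tilde Z}$, i.e. that the transition map is defined on all of $\theta_Z(\mathcal{U}_Z\cap\mathcal{U}_{\tilde Z})$ and lands in $\mathrm{GL}_r$ in the third slot; everything else is a direct transcription of the one-sided argument.
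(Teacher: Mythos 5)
Your proposal is correct and follows essentially the same route as the paper: covering, bijectivity of $\theta_Z$, openness of the chart images via the ambient norm topology on $\mathbb{R}^{n\times m}$, and analyticity of the explicit transition map. The only place to tighten is your hesitant characterization of $\mathcal{U}_Z$ by determinants: the single condition $\det(U^T W V)\neq 0$ suffices (the paper writes $\mathcal{U}_Z=\mathcal{O}_Z\cap\mathcal{M}_r(\mathbb{R}^{n\times m})$ with $\mathcal{O}_Z=\{A\in\mathbb{R}^{n\times m}:\det(U^TAV)\neq 0\}$), which simultaneously encodes both the column-space and row-space conditions and makes the openness and the invertibility of the matrices appearing in $\theta_{\tilde Z}$ immediate.
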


\begin{proof}
$\{\mathcal{U}_Z\}_{Z \in \mathcal{M}_r(\mathbb{R}^{n\times m})}$ is clearly a covering
of $\mathcal{M}_r(\mathbb{R}^{n\times m})$. Moreover, 
since $\theta_{Z}$ is bijective from $\mathcal{U}_Z$ to $\mathbb{R}^{(n-r) \times r} \times \mathbb{R}^{(m-r) \times r} \times \mathrm{GL}_r$, we claim that if $\mathcal{U}_Z \cap \mathcal{U}_{\tilde Z} \neq \emptyset$ for $Z = UGV^T$ and $\tilde Z = \tilde U\tilde G\tilde V^T \in \mathcal{M}_r(\mathbb{R}^{n\times m}),$ then  the following statements hold:
\begin{enumerate}
\item[i)] $\theta_{Z}(\mathcal{U}_Z \cap \mathcal{U}_{\tilde Z})$ and $\theta_{\tilde Z}(\mathcal{U}_Z \cap \mathcal{U}_{\tilde Z})$ are open sets
in $\mathbb{R}^{(n-r) \times r} \times \mathbb{R}^{(m-r) \times r} \times \mathrm{GL}_r$ and
\item[ii)]the map $
\theta_{\tilde Z} \circ \theta_{Z}^{-1}
$ is analytic from $\theta_{Z}(\mathcal{U}_Z \cap \mathcal{U}_{\tilde Z}) \subset
\mathbb{R}^{(n-r) \times r} \times \mathbb{R}^{(m-r) \times r} \times \mathrm{GL}_r$ to $\theta_{\tilde Z}(\mathcal{U}_Z \cap \mathcal{U}_{\tilde Z}) \subset \mathbb{R}^{(n-r) \times r} \times \mathbb{R}^{(m-r) \times r} \times \mathrm{GL}_r$.
\end{enumerate}
In this proof, we equip $\mathbb{R}^{n\times m}$ with the topology $\tau_{\mathbb{R}^{n\times m}}$ induced by matrix norms. 
We first observe that the set 
$\mathcal{U}_Z = \{A \in \mathcal{M}_{r}(\mathbb{R}^{n\times m}) : \det(U^TA V)\neq 0\} = \mathcal{O}_Z \cap  \mathcal{M}_{r}(\mathbb{R}^{n\times m}) $, where $\mathcal{O}_Z = \{A \in \mathbb{R}^{n\times m} : \det(U^TA V)\neq 0\}$, as the inverse image of the open set $\mathbb{R}\setminus \{0\}$ through the continuous map $A\mapsto \det(U^TAV)$ from $\mathbb{R}^{n\times m}$ to $\mathbb{R}$, 
 is an open set in $\mathbb{R}^{n\times m}$.   In the same way, we have that $\mathcal{U}_{\tilde Z} = \mathcal{O}_{\tilde Z} \cap  \mathcal{M}_{r}(\mathbb{R}^{n\times m})$, with $\mathcal{U}_{\tilde Z} $ an open set in  $\mathbb{R}^{n\times m}$.
Since  $\mathcal{U}_Z \cap \mathcal{U}_{\tilde Z} = \mathcal{O}_Z \cap  \mathcal{O}_{\tilde Z} \cap \mathcal{M}_{r}(\mathbb{R}^{n\times m}),$ and since the image of $\theta_Z^{-1}$ is in $\mathcal{M}_{r}(\mathbb{R}^{n\times m})$, we have 
 $$\theta_Z(\mathcal{U}_Z \cap \mathcal{U}_{\tilde Z}) = (\theta_Z^{-1})^{-1}(\mathcal{U}_Z \cap \mathcal{U}_{\tilde Z}) = (\theta_Z^{-1})^{-1}(\mathcal{O}_Z \cap  \mathcal{O}_{\tilde Z}),$$
 the inverse image through $\theta_Z^{-1}$ of the open set $\mathcal{O}_Z \cap  \mathcal{O}_{\tilde Z}$ in $\mathbb{R}^{n\times m}$. Since $\theta_Z^{-1}$ is a continuous map from    $\mathbb{R}^{(n-r) \times r} \times \mathbb{R}^{(m-r) \times r} \times \mathrm{GL}_r$ to $\mathbb{R}^{n\times m}$, we deduce that $\theta_Z(\mathcal{U}_Z \cap \mathcal{U}_{\tilde Z})$ is an open set in $\mathbb{R}^{(n-r) \times r} \times \mathbb{R}^{(m-r) \times r} \times \mathrm{GL}_r$. 
 Similarly, $\theta_{\tilde Z}(\mathcal{U}_Z \cap \mathcal{U}_{\tilde Z})$ is an open set in $\mathbb{R}^{(n-r) \times r} \times \mathbb{R}^{(m-r) \times r} \times \mathrm{GL}_r$.
Now, let $(X,Y,H) \in \mathbb{R}^{(n-r) \times r} \times \mathbb{R}^{(m-r) \times r} \times \mathrm{GL}_r$ such that  
$ \theta_{Z}^{-1}(X,Y,H)   \in \mathcal{U}_Z \cap \mathcal{U}_{\tilde Z}$. From the expressions of $\theta_Z^{-1}$ and $ \theta_{\tilde Z}$, the map $\theta_{\tilde Z}\circ \theta_{Z}^{-1}$ is defined by
\begin{align*}
\theta_{\tilde Z}\circ \theta_{Z}^{-1}(X,Y,H)=  (&\tilde U_\bot^+\theta_{Z}^{-1}(X,Y,H)(\tilde V^+)^T(\tilde U^+\theta_{Z}^{-1}(X,Y,H)(\tilde V^+)^T)^{-1},\\
&\tilde V_\bot^+\theta_{Z}^{-1}(X,Y,H)^T(\tilde U^+)^T(\tilde V^+\theta_{Z}^{-1}(X,Y,H)^T(\tilde U^+)^T)^{-1},\\
&\tilde U^+\theta_{Z}^{-1}(X,Y,H)(\tilde V^+)^T),
\end{align*}
with $\theta_{Z}^{-1}(X,Y,H) = (U+U_\bot X)H(V+V_\bot Y)^T$, which is clearly an analytic map.
 \end{proof}

\begin{theorem}
The set $\mathcal{M}_r(\mathbb{R}^{n \times m})$ is an analytic principal bundle with typical fibre $\mathrm{GL}_r$ and base $\mathbb{G}_r(\mathbb{R}^n) \times \mathbb{G}_r(\mathbb{R}^m)$ with surjective morphism $\varrho_{r}$ between $\mathcal{M}_r(\mathbb{R}^{n \times m})$  and  $\mathbb{G}_r(\mathbb{R}^n) \times \mathbb{G}_r(\mathbb{R}^m)$ given by $\varrho_r.$
\end{theorem}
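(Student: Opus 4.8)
The plan is to follow, with the evident modifications, the scheme used to establish the principal bundle structure of $\mathcal{M}_r(\mathbb{R}^{k\times r})$; the single new ingredient is that the base is now a product of two Grassmannians. First I would equip $\mathbb{G}_r(\mathbb{R}^n)\times\mathbb{G}_r(\mathbb{R}^m)$ with the product atlas: for each point $b=(\mathcal{W}_1,\mathcal{W}_2)$ choose $U_b\in\mathcal{M}_r(\mathbb{R}^{n\times r})$ and $V_b\in\mathcal{M}_r(\mathbb{R}^{m\times r})$ with $\mathrm{col}_{n,r}(U_b)=\mathcal{W}_1$ and $\mathrm{col}_{m,r}(V_b)=\mathcal{W}_2$, and take the chart $(\mathfrak{U}_{U_b}\times\mathfrak{U}_{V_b},\ \varphi_{U_b}\times\varphi_{V_b})$; this is an analytic atlas by Theorem~\ref{grassmann} together with the standard fact that a finite product of analytic atlases is analytic. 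A preliminary observation I would record is that $\mathfrak{U}_U$ depends only on $\mathrm{col}_{n,r}(U)$: since $\det((UP)^TW)=\det(P^T)\det(U^TW)$ for every $P\in\mathrm{GL}_r$ we have $\mathcal{V}_{UP}=\mathcal{V}_U$, hence $\mathfrak{U}_{UP}=\mathrm{col}_{n,r}(\mathcal{V}_{UP})=\mathfrak{U}_U$. In particular, setting $Z_b:=U_bV_b^T$, the definition $\mathcal{U}_Z=\varrho_r^{-1}(\mathfrak{U}_U\times\mathfrak{U}_V)$ gives at once $\varrho_r^{-1}(\mathfrak{U}_{U_b}\times\mathfrak{U}_{V_b})=\mathcal{U}_{Z_b}$.

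Next I would check that $\varrho_r$ is a surjective analytic morphism. Surjectivity is clear: given $(\mathcal{W}_1,\mathcal{W}_2)$, pick $U,V$ with those column spaces and take $Z=UV^T$. For the morphism property, fix $Z=UGV^T$ with the charts $(\mathcal{U}_Z,\theta_Z)\in\mathcal{B}_{n,m,r}$ and $(\mathfrak{U}_U\times\mathfrak{U}_V,\varphi_U\times\varphi_V)$; since $\mathrm{col}_{n,r}\big((U+U_\bot X)H(V+V_\bot Y)^T\big)=\mathrm{col}_{n,r}(U+U_\bot X)$ and the row space of the same matrix is $\mathrm{col}_{m,r}(V+V_\bot Y)$, one obtains
\[
\big((\varphi_U\times\varphi_V)\circ\varrho_r\circ\theta_Z^{-1}\big)(X,Y,H)=(X,Y),
\]
which is analytic (and a submersion); this also yields $\varrho_r(\mathcal{U}_Z)=\mathfrak{U}_U\times\mathfrak{U}_V$.

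The core of the argument is the construction of the fibre bundle charts. For $b$ as above, define
\[
\chi_b:\varrho_r^{-1}(\mathfrak{U}_{U_b}\times\mathfrak{U}_{V_b})=\mathcal{U}_{Z_b}\longrightarrow(\mathfrak{U}_{U_b}\times\mathfrak{U}_{V_b})\times\mathrm{GL}_r,\qquad W\longmapsto\big(\varrho_r(W),\ U_b^+W(V_b^+)^T\big).
\]
Writing $W=\theta_{Z_b}^{-1}(X,Y,H)=(U_b+U_{b\bot}X)H(V_b+V_{b\bot}Y)^T$ and using $U_b^+U_b=id_r$, $U_b^+U_{b\bot}=0$, $V_b^+V_b=id_r$, $V_b^+V_{b\bot}=0$ one finds $U_b^+W(V_b^+)^T=H$; hence $\chi_b$ is a bijection, being $\theta_{Z_b}$ followed by reading its first two coordinates through $\varphi_{U_b}\times\varphi_{V_b}$. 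Its representation in the charts $\theta_{Z_b}$ on the source and $(\varphi_{U_b}\times\varphi_{V_b})\times id_{\mathbb{R}^{r\times r}}$ on the target is
\[
\big((\varphi_{U_b}\times\varphi_{V_b})\times id_{\mathbb{R}^{r\times r}}\big)\circ\chi_b\circ\theta_{Z_b}^{-1}(X,Y,H)=(X,Y,H),
\]
the identity of $\mathbb{R}^{(n-r)\times r}\times\mathbb{R}^{(m-r)\times r}\times\mathrm{GL}_r$, so $\chi_b$ is an analytic diffeomorphism between manifolds. Finally, with the projection $p_b:(\mathfrak{U}_{U_b}\times\mathfrak{U}_{V_b})\times\mathrm{GL}_r\to\mathfrak{U}_{U_b}\times\mathfrak{U}_{V_b}$ one has $(p_b\circ\chi_b)(W)=\varrho_r(W)$ for all $W\in\mathcal{U}_{Z_b}$. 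This shows $\mathcal{M}_r(\mathbb{R}^{n\times m})$ is an analytic fibre bundle over $\mathbb{G}_r(\mathbb{R}^n)\times\mathbb{G}_r(\mathbb{R}^m)$ with typical fibre $\mathrm{GL}_r$; since $\mathrm{GL}_r$ is a Lie group (an open subset of $\mathbb{R}^{r\times r}$ with analytic group operations), it is by definition an analytic principal bundle.

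The computations are all routine and mirror Section~\ref{sec:fullrank}. The only points that genuinely have to be verified, rather than merely transcribed, are the $\mathrm{GL}_r$-invariance of the base neighbourhood $\mathfrak{U}_U$ (needed so that $\varrho_r^{-1}$ of a base chart domain is exactly one of the sets $\mathcal{U}_Z$) and the bookkeeping showing that the product of the two Grassmann atlases is the correct base atlas; I do not anticipate a real obstacle beyond this. If one additionally wants to describe how the fibre coordinate transforms, computing $\chi_{b'}\circ\chi_b^{-1}$ on an overlap gives $H\mapsto\big(\tilde U^+(U+U_\bot X)\big)\,H\,\big((V+V_\bot Y)^T(\tilde V^+)^T\big)$, i.e.\ left and right multiplication by invertible matrices depending analytically on the base point.
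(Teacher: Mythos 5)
Your proposal is correct and follows essentially the same route as the paper: identify $\varrho_r^{-1}(\mathfrak{U}_U\times\mathfrak{U}_V)$ with $\mathcal{U}_Z$, build the fibre bundle chart whose representation in the charts $\theta_Z$ and $(\varphi_U\times\varphi_V)\times id_{\mathbb{R}^{r\times r}}$ is the identity map, and compose with the projection to recover $\varrho_r$. Your version is slightly more explicit in a few places (the closed formula $U_b^+W(V_b^+)^T$ for the fibre coordinate, the $\mathrm{GL}_r$-invariance of $\mathfrak{U}_U$, and the transition maps on the fibre), but the argument is the same.
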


\begin{proof}
To prove that it is an analytic principal bundle, we consider the surjective map
$$
\varrho_{r}:\mathcal{M}_r(\mathbb{R}^{n \times m})
\longrightarrow \mathbb{G}_r(\mathbb{R}^n) \times \mathbb{G}_r(\mathbb{R}^m), \quad
UGV^T \mapsto (\mathrm{col}_{n,r} (U), \mathrm{col}_{m,r}(V)),
$$
the atlas $\mathcal{A}_{n,r}:=\{(\mathfrak{U}_{U},\varphi_{U}):U \in \mathcal{M}_{r}(\mathbb{R}^{n\times r})\}$ of $\mathbb{G}_r(\mathbb{R}^n)$ and 
the atlas $\mathcal{A}_{m,r}:=\{(\mathfrak{U}_{V},\varphi_{V}):V \in \mathcal{M}_{r}(\mathbb{R}^{m\times r})\}$ of $\mathbb{G}_r(\mathbb{R}^m).$ Recall that
$$
\mathfrak{U}_{Z}=\{\mathrm{col}_{k,r}(Z+Z_{\bot}X): X\in \mathbb{R}^{(k-r)\times r}\},
$$
with $k=n$ if $Z=U$ or $k=m$ if $Z=V$, and hence
$$
\varrho_{r}^{-1}(\mathfrak{U}_{U},\mathfrak{U}_{V})=
\left\{(U+U_{\bot}X)H(V+V_{\bot}Y)^T:
X \in \mathbb{R}^{(n-r)\times r}, 
Y \in \mathbb{R}^{(m-r)\times r},  H \in  \mathrm{GL}_r
\right\}.
$$
Observe that for each fixed $G \in \mathrm{GL}_r$, we have that
$\varrho_{r}^{-1}(\mathfrak{U}_{U},\mathfrak{U}_{V})= \mathcal{U}_Z,$
where $Z=UGV^T$. Since $\mathcal{U}_Z = \mathcal{U}_{Z'}$ holds for $Z'=UG'V^T,$ 
where $G' \in \mathrm{GL}_r,$ the map
$$
\chi_Z: \mathcal{U}_Z \longrightarrow \mathfrak{U}_{U} \times \mathfrak{U}_{V} \times \mathrm{GL}_r
$$
defined by
$$
\chi_Z(U'H'(V')^T):=(\mathrm{col}_{n,r}(U'),\mathrm{col}_{m,r}(V'),H'),
$$
is independent of the choice of $Z=UGV^T,$ where $G \in \mathrm{GL}_r.$ Now, the representation of $\chi_Z$ in local coordinates
is the map
$$
((\varphi_U \times \varphi_V \times id_{\mathbb{R}^{r\times r}})\circ \chi_Z \circ \theta_Z^{-1}):
\mathbb{R}^{(n-r) \times r}\times \mathbb{R}^{(m-r) \times r} \times \mathrm{GL}_r 
\longrightarrow \mathbb{R}^{(n-r) \times r}\times \mathbb{R}^{(m-r) \times r} \times \mathrm{GL}_r
$$
given by $((\varphi_U \times \varphi_V \times id_{\mathbb{R}^{r\times r}})\circ \chi_Z \circ \theta_Z^{-1})(X,Y,H)=(X,Y,H)$, which
is an analytic diffeomorphism. Moreover, let $p_Z:\mathfrak{U}_{U} \times \mathfrak{U}_{V} \times \mathrm{GL}_r \longrightarrow \mathfrak{U}_{U} \times \mathfrak{U}_{V}$ be the projection over the two first components. Then
$$
(p_Z \circ \chi_Z)(UHV^T)=(\mathrm{col}_{n,r}(U),\mathrm{col}_{m,r}(V)) = \varrho_{r}(UHV^T)
$$
and the theorem follows.
\end{proof}

\subsection{$\mathcal{M}_r(\mathbb{R}^{n\times m})$ as a submanifold and its tangent space}

Here, we prove that $\mathcal{M}_r(\mathbb{R}^{n\times m})$  equipped with the topology given by the atlas
$\mathcal{B}_{n,m,r}$ is an embedded submanifold in $\mathbb{R}^{n\times m}.$ 
For that, we have to prove that the standard inclusion map ${i}:\mathcal{M}_r(\mathbb{R}^{n \times m}) \rightarrow
\mathbb{R}^{n \times m}$ is an embedding. Noting that the inclusion map restricted to the neighbourhood $\mathcal{U}_Z$ of $Z = UGV^T$ is identified with 
$$
({i} \circ \theta_{Z}^{-1}):\mathbb{R}^{(n-r)\times r} \times \mathbb{R}^{(m-r)\times r} \times \mathrm{GL}_r \longrightarrow \mathbb{R}^{n \times m}, \quad (X,Y,H) \mapsto (U+U_{\bot}X)H(V+V_{\bot}Y)^T,
$$
the tangent map $T_Z i$ at $Z = \theta_Z^{-1}(0,0,G)$, defined by $T_Zi = D({i} \circ \theta_{Z}^{-1})(0,0,G)$, is 
\begin{align*}
\mathrm{T}_{Z}{i}:\mathbb{R}^{(n-r)\times r} \times \mathbb{R}^{(m-r)\times r} \times 
\mathbb{R}^{r\times r} \rightarrow \mathbb{R}^{n \times m}, \quad (\dot  X,\dot  Y,\dot  H) \mapsto 
U_{\bot}\dot  XGV^T  + U G (V_{\bot}\dot  Y)^T + U\dot  H V^T.
\end{align*}

\begin{proposition}\label{fixedrank-submanifold}
The tangent map $\mathrm{T}_{Z}i:\mathbb{R}^{(n-r)\times r} \times\mathbb{R}^{(m-r)\times r} \times \mathbb{R}^{r \times r} \rightarrow \mathbb{R}^{n\times m}$ at $Z = UGV^T \in \mathcal{M}_r(\mathbb{R}^{n\times m})$ is a linear isomorphism with inverse $ (T_Z i)^{-1}$ given by 
  $$
 (T_Z i)^{-1}(\dot Z) = (U_\bot^+ \dot Z (V^+)^TG^{-1},V_\bot^+\dot Z^T (U^+)^TG^{-T},U^+ \dot Z(V^+)^T)
 $$ 
 for $\dot Z \in \mathbb{R}^{n \times m} $. 
Furthermore, the standard inclusion map $i$ is an embedding from $\mathcal{M}_r(\mathbb{R}^{n \times m})$
to $\mathbb{R}^{n \times m}.$
\end{proposition}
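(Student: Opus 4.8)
The plan is to follow the two-part scheme of Proposition~\ref{fullrank-submanifold}: first show that $\mathrm{T}_Z i$ is injective, so that $i$ is an immersion by Proposition~\ref{prop_inmersion}; then show that $i$, viewed as the identity bijection from $(\mathcal{M}_r(\mathbb{R}^{n\times m}),\tau_{\mathcal{B}_{n,m,r}})$ onto $(\mathcal{M}_r(\mathbb{R}^{n\times m}),\tau_{\mathbb{R}^{n\times m}}|_{\mathcal{M}_r(\mathbb{R}^{n\times m})})$, is a homeomorphism. An injective immersion that is a homeomorphism onto its image is an embedding, which is the conclusion.

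For the first part I would substitute $\dot Z=\mathrm{T}_Z i(\dot X,\dot Y,\dot H)=U_{\bot}\dot X G V^T+UG(V_{\bot}\dot Y)^T+U\dot H V^T$ into the claimed formula for $(\mathrm{T}_Z i)^{-1}$ and simplify the three components using $U^+U=id_r$, $U^+U_{\bot}=0$, $U_{\bot}^+U_{\bot}=id_{n-r}$, $U_{\bot}^+U=0$ and the analogous identities for $V$, together with $U^T(U^+)^T=id_r$, $V^T(V^+)^T=id_r$ and $V_{\bot}^T(V^+)^T=0$; the three components then collapse to $\dot X$, $\dot Y$ and $\dot H$, so $(\mathrm{T}_Z i)^{-1}\circ \mathrm{T}_Z i=id$ and $\mathrm{T}_Z i$ is injective. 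Since the domain has dimension $r(n+m-r)$, $\mathrm{T}_Z i$ is a linear isomorphism onto its image, on which the displayed map restricts to a genuine inverse. As every point of $\mathcal{M}_r(\mathbb{R}^{n\times m})$ is of the form $\theta_W^{-1}(0,0,G)=W$ for a suitable $W=UGV^T$, this shows $i$ is an immersion at every point.

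For the second part, the key point is that the chart $\theta_Z$ displayed before the atlas theorem is a rational map in the entries of $A$ whose only singularity is the non-invertibility of $U^+A(V^+)^T=(U^TU)^{-1}(U^TAV)(V^TV)^{-1}$, i.e.\ the locus $\det(U^TAV)=0$. Hence $\theta_Z$ is the restriction to $\mathcal{U}_Z=\mathcal{O}_Z\cap\mathcal{M}_r(\mathbb{R}^{n\times m})$ of a continuous (indeed analytic) map $\widetilde\theta_Z$ defined on the open set $\mathcal{O}_Z=\{A\in\mathbb{R}^{n\times m}:\det(U^TAV)\neq 0\}$, whose $\mathrm{GL}_r$-component is invertible throughout $\mathcal{O}_Z$. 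Since $(i\circ\theta_Z^{-1})(X,Y,H)=(U+U_{\bot}X)H(V+V_{\bot}Y)^T$ is polynomial, hence continuous, and its inverse $\theta_Z$ is the restriction of the continuous map $\widetilde\theta_Z$, the bijection $i\circ\theta_Z^{-1}:\mathbb{R}^{(n-r)\times r}\times\mathbb{R}^{(m-r)\times r}\times\mathrm{GL}_r\to\mathcal{U}_Z$ is a homeomorphism when $\mathcal{U}_Z$ carries the subspace topology. Because each $\theta_Z$ is already a homeomorphism for $\tau_{\mathcal{B}_{n,m,r}}$ and the $\mathcal{U}_Z$ cover $\mathcal{M}_r(\mathbb{R}^{n\times m})$, the two topologies coincide, so $i$ is a homeomorphism onto its image; combined with the first part, $i$ is an embedding.

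The main obstacle is that, unlike the full-rank setting of Proposition~\ref{fullrank-submanifold}, here $r(n+m-r)<nm$ in general, so $\mathcal{U}_Z$ is not open in $\mathbb{R}^{n\times m}$ and the inverse function theorem can no longer be used to pass from ``the local parametrisation is an immersion'' to ``the local parametrisation is a homeomorphism onto its image''. The way around this is to exhibit the explicit rational extension $\widetilde\theta_Z$ of the chart to an open subset of the ambient space; the only genuinely new verification compared with the full-rank proof is that $\widetilde\theta_Z$ is well defined and continuous on all of $\mathcal{O}_Z$, in particular that $U^+A(V^+)^T$ stays invertible there.
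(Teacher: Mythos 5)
Your proposal is correct, and for the decisive step it takes a genuinely different (and arguably sounder) route than the paper. For the immersion part the two arguments are essentially the same computation: the paper kills the three terms of $\mathrm{T}_Z i(\dot X,\dot Y,\dot H)=0$ by multiplying on the left by $U^+$, $U_\bot^+$ and on the right by $(V^+)^T$, $(V_\bot^+)^T$, while you package the same identities as the verification that the displayed formula is a left inverse; both give injectivity and hence an immersion via Proposition~\ref{prop_inmersion}. (You are in fact more careful than the paper here: since $r(n+m-r)<nm$, the map $\mathrm{T}_Z i$ cannot be onto $\mathbb{R}^{n\times m}$, and your reading of it as an isomorphism onto its image, with the displayed map as a one-sided inverse, is the correct one; the paper's claim of surjectivity onto $\mathbb{R}^{n\times m}$ is an overstatement.) The real divergence is in the homeomorphism step. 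The paper computes $D(i\circ\theta_Z^{-1})(X,Y,H)$, shows it is injective at every point of the chart, and then invokes the inverse function theorem to conclude that $i\circ\theta_Z^{-1}$ is a diffeomorphism onto $\mathcal{U}_Z$; as you observe, the hypotheses of the inverse function theorem are not met when the differential is injective but not surjective, so this step really requires the rank/immersion theorem plus an additional argument that the local slices glue to give a homeomorphism onto the image with the subspace topology. Your alternative --- exhibiting the chart $\theta_Z$ as the restriction to $\mathcal{U}_Z=\mathcal{O}_Z\cap\mathcal{M}_r(\mathbb{R}^{n\times m})$ of the explicit rational map $\widetilde\theta_Z$ defined and analytic on the ambient open set $\mathcal{O}_Z=\{A:\det(U^TAV)\neq 0\}$ --- gives continuity of $\theta_Z$ for the subspace topology directly and elementarily, and combined with the polynomial (hence continuous) inverse $\theta_Z^{-1}$ and the openness of each $\mathcal{U}_Z$ in both topologies, yields the global homeomorphism without any appeal to the inverse function theorem. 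What your approach buys is a complete and self-contained proof that actually closes a gap in the paper's argument; what the paper's approach would buy, if repaired, is a statement of local normal form (the constant rank picture) that is not needed for the embedding claim itself.
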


\begin{proof}
Let us suppose that 
$
\mathrm{T}_{Z}i(\dot  X, \dot Y, \dot  H) =0.
$
Multiplying this equality by $(U_\bot)^+$ and $U^+$ on the left leads to  
$$
\dot X G V^T =0 \text{ and } G(V_\bot \dot Y)^T + \dot H V^T =0
$$
respectively. 
By multiplying the first equation by $(V^+)^T$ on the right, we obtain $\dot X =0$. By multiplying 
the second equation on the right by $(V^+)^T$ and  $(V_\bot^+)^T$, we respectively obtain $\dot H =0$ and $\dot Y=0$.  
Then, $\mathrm{T}_{Z}i$ is injective and then $i$ is an immersion. For $\dot Z \in \mathbb{R}^{n \times m}$, we note  that  $\dot X = U_\perp^+ \dot Z(V^+)^T G^{-1} \in \mathbb{R}^{n \times r}$, $\dot Y = V_\perp^+ \dot Z^T(U^+)^T G^{-T} \in \mathbb{R}^{m \times r}$, and $\dot G = U^+ \dot Z (V^+)^T \in \mathbb{R}^{r \times r}$ is such that $T_Zi(\dot X,\dot Y,\dot G) = \dot Z$, then $\mathrm{T}_{Z}i$ is also surjective. 
Let us now equip $\mathcal{M}_r(\mathbb{R}^{n\times m})$ with the topology $\tau_{\mathcal{B}_{n,m,r}}$ given by the atlas and $\mathbb{R}^{n \times m}$ with the topology $\tau_{\mathbb{R}^{n\times m}}$ induced by matrix norms. We have to prove that 
$$
i : (\mathcal{M}_r(\mathbb{R}^{n\times m}), \tau_{\mathcal{B}_{n,m,r}}) \longrightarrow (\mathcal{M}_r(\mathbb{R}^{n\times m}), \tau_{\mathbb{R}^{n\times m}|\mathcal{M}_r(\mathbb{R}^{n\times m})})
$$
is a topological isomorphism. The topology in $(\mathcal{M}_{r}(\mathbb{R}^{n\times m}),\tau_{\mathcal{B}_{n,m,r}})$
is such that a local chart $\theta_Z$ is a homeomorphism from $\mathcal{U}_Z \subset \mathcal{M}_{r}(\mathbb{R}^{n\times m})$ to $\theta_Z(\mathcal{U}_Z) = \mathbb{R}^{(n-r)\times r} \times \mathbb{R}^{(m-r)\times r} \times \mathrm{GL}_r$ (see Section \ref{subsec:elements_geometry}).
Then, to prove that the map $i$ is an embedding, we need to show that the bijection $$({i} \circ \theta_{Z}^{-1}):\mathbb{R}^{(n-r)\times r} \times \mathbb{R}^{(m-r)\times r} \times \mathrm{GL}_r \longrightarrow \mathcal{U}_Z \subset \mathbb{R}^{n \times m}$$ is a topological homeomorphism. For that, observe that its differential
$$
D({i} \circ \theta_{Z}^{-1})(X,Y,H) \in \mathcal{L}(\mathbb{R}^{(n-r)\times r} \times \mathbb{R}^{(m-r)\times r} \times\mathbb{R}^{r \times r}, \mathbb{R}^{n \times m})
$$
at $(X,Y,H) \in \mathbb{R}^{(n-r)\times r} \times \mathbb{R}^{(m-r)\times r} \times \mathrm{GL}_r$ is given by
\begin{align*}
&D({i} \circ \theta_{Z}^{-1})(X,Y,H)[(\dot{X},\dot{Y},\dot{H})] \\
&= (U_{\bot}\dot{X})H(V+V_{\bot}Y)^T + (U+U_{\bot}X)H(V_{\bot}\dot{Y})^T+
(U+U_{\bot}X)\dot{H}(V+V_{\bot}Y)^T.
\end{align*}
Assume that
\begin{align}\label{igualdad}
(U_{\bot}\dot{X})H(V+V_{\bot}Y)^T + (U+U_{\bot}X)H(V_{\bot}\dot{Y})^T+
(U+U_{\bot}X)\dot{H}(V+V_{\bot}Y)^T = 0.
\end{align}
Multiplying on the left by $U^+$ and on the right by $(V^+)^T$, we obtain
$\dot{H} = 0.$
Multiplying on the left by $U_{\bot}^{+}$ and on the right by $(V^+)^T$ we deduce
that $\dot{X}H= 0,$ that is, $\dot{X}=0.$
Finally, multiplying on the left by $U^{+}$ and on the right by $(V_{\bot}^+)^T$
we obtain $H\dot{Y}^T= 0,$ and hence $\dot{Y}= 0.$ Thus,
$D({i} \circ \theta_{Z}^{-1})(X,Y,H)$ is a linear isomorphism 
from $\mathbb{R}^{(n-r)\times r} \times \mathbb{R}^{(m-r)\times r} \times\mathbb{R}^{r \times r}$ to  
$D({i} \circ \theta_{Z}^{-1})(X,Y,H)[\mathbb{R}^{(n-r)\times r} \times \mathbb{R}^{(m-r)\times r} \times \mathbb{R}^{r \times r}]$
for each
$(X,Y,H) \in \mathbb{R}^{(n-r)\times r} \times \mathbb{R}^{(m-r)\times r} \times \mathrm{GL}_r.$
The inverse function theorem says us that 
$({i} \circ \theta_{Z}^{-1})$ is a diffeomorphism from $\mathbb{R}^{(n-r)\times r} \times \mathbb{R}^{(m-r)\times r} \times \mathrm{GL}_r$ to $\mathcal{U}_Z = ({i} \circ \theta_{Z}^{-1})(\mathbb{R}^{(n-r)\times r} \times \mathbb{R}^{(m-r)\times r} \times \mathrm{GL}_r)$ and in particular, it is a topological homeomorphism.
In consequence, the map $i$ is an embedding.
\end{proof}

The tangent space  to $\mathcal{M}_r(\mathbb{R}^{n\times m})$ at $Z = UGV^T$, which is the image through $T_Z i$ of the tangent space in local coordinates $\mathbb{T}_{Z}\mathcal{M}_r(\mathbb{R}^{n\times m})=\mathbb{R}^{(n-r)\times r} \times \mathbb{R}^{(m-r)\times r}\times \mathbb{R}^{r\times r}$, is 
\begin{align*}
{T}_Z \mathcal{M}_r(\mathbb{R}^{n\times m}) 
&= \{ U_{\bot} \dot X GV^T  +U G (V_\bot \dot Y)^T +  U \dot G V^T : \dot X \in \mathbb{R}^{(n-r)\times r}, \dot Y \in \mathbb{R}^{(m-r)\times r} ,\dot G \in  \mathbb{R}^{r\times r}  \},
\end{align*}
and can be decomposed into a vertical tangent space 
$$
{T}_Z^V \mathcal{M}_r(\mathbb{R}^{n\times m}) =  \{  U \dot G V^T : \dot G \in  \mathbb{R}^{r\times r}  \},
$$
and an horizontal tangent space
$$
{T}_Z^H \mathcal{M}_r(\mathbb{R}^{n\times m}) =  \{U_{\bot} \dot X GV^T  +U G (V_\bot \dot Y)^T   :\dot X \in \mathbb{R}^{(n-r)\times r},  \dot Y \in \mathbb{R}^{(m-r)\times r}   \}.
$$

\subsection{Lie group structure of neighbourhoods $\mathcal{U}_Z$}
We here prove that $\mathcal{M}_r(\mathbb{R}^{n\times m})$ has locally the structure of a Lie group by proving that the neighbourhoods $\mathcal{U}_Z$ can be identified with Lie groups. \\

Let $Z = UGV^T \in \mathcal{M}_r(\mathbb{R}^{n\times m})$. We first note that 
$\mathcal{U}_Z$ can be identified with 
$\mathcal{S}_U \times \mathcal{S}_V \times \mathrm{GL}_r$, with $\mathcal{S}_U$ and $\mathcal{S}_V$ defined by \eqref{SZLiegroup}. Noting that $\mathcal{S}_U$ and $\mathcal{S}_V$ can be identified with Lie groups $\mathcal{G}_U$ and $\mathcal{G}_V$ defined in \eqref{GZ}, we then have that $\mathcal{U}_Z$ can be identified with a product of three Lie groups, which is a Lie group with the  
group operation $\odot_Z$ given by
\begin{align*}
(\exp(U_{\bot}XU^+),\exp(V_{\bot}YV^+),G) &\odot_Z (\exp(U_{\bot} X'U^+),\exp(V_{\bot} Y'V^+), G') \\
&= (\exp(U_{\bot}(X+ X')U^+),\exp(V_{\bot}(Y+ Y')V^+),G G'). 
\end{align*}
It allows us to define a group operation $\star_Z$  over $\mathcal{U}_Z$
 defined for $W= \theta_Z^{-1}(X,Y,G)$ and $ W'= \theta_Z^{-1}( X', Y',G')$  by
\begin{align}
W \star_Z  W' 
& = \theta_Z^{-1}(X+ X',Y+ Y',G G'), \label{group operation UZ}
\end{align}
and to state the following result. 
\begin{theorem}\label{th:groupstructure2}
Let $Z = UGV^T \in \mathcal{M}_r(\mathbb{R}^{n\times m})$.
Then
the set $\mathcal{U}_Z$ together with the group operation $\star_Z$ defined by \eqref{group operation UZ}
 is a Lie group with identity element $UV^T$,  
 and the map $\eta_Z : \mathcal{U}_Z \to \mathcal{G}_U \times \mathcal{G}_V \times \mathrm{GL}_r$
given by  
$$
\eta_Z(\theta_Z^{-1}(X,Y,H)) = (\exp(U_\bot X U^+),\exp(V_\bot Y V^+),H)
$$
is a Lie group isomorphism. 
\end{theorem}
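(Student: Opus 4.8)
The plan is to transport the known Lie group structures on $\mathcal{G}_U$, $\mathcal{G}_V$ (established in Section~\ref{sec:liegroupSZ}) and on $\mathrm{GL}_r$ across the bijections built in Section~\ref{sec:fixed-rank}, and then check that the resulting group operation on $\mathcal{U}_Z$ is exactly $\star_Z$. First I would make the identification explicit: the chart $\theta_Z$ gives a bijection $\mathcal{U}_Z \to \mathbb{R}^{(n-r)\times r}\times\mathbb{R}^{(m-r)\times r}\times\mathrm{GL}_r$, and by Corollary~\ref{second_coordinate} applied to $U$ and to $V$, the first two factors are in analytic bijection with $\mathcal{S}_U$ and $\mathcal{S}_V$ via $X \mapsto \exp(U_\bot X U^+)U$ and $Y\mapsto \exp(V_\bot Y V^+)V$; composing, one gets an analytic bijection $\Phi_Z : \mathcal{U}_Z \to \mathcal{G}_U\times\mathcal{G}_V\times\mathrm{GL}_r$ sending $\theta_Z^{-1}(X,Y,H)$ to $(\exp(U_\bot X U^+),\exp(V_\bot Y V^+),H)$. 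Since $\mathcal{G}_U$, $\mathcal{G}_V$ are closed Lie groups and $\mathrm{GL}_r$ is a Lie group, the product $\mathcal{G}_U\times\mathcal{G}_V\times\mathrm{GL}_r$ is a Lie group with the componentwise operation $\odot_Z$; transporting this structure through $\Phi_Z$ makes $\mathcal{U}_Z$ a Lie group, and $\Phi_Z=\eta_Z$ is tautologically a Lie group isomorphism provided the transported operation agrees with $\star_Z$.

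The next step is to verify that this transported operation is indeed $\star_Z$, i.e.\ that $\eta_Z(W\star_Z W') = \eta_Z(W)\odot_Z\eta_Z(W')$ for $W=\theta_Z^{-1}(X,Y,G)$ and $W'=\theta_Z^{-1}(X',Y',G')$. This is immediate from the definitions: by \eqref{group operation UZ} the left side is $\eta_Z(\theta_Z^{-1}(X+X',Y+Y',GG')) = (\exp(U_\bot(X+X')U^+),\exp(V_\bot(Y+Y')V^+),GG')$, and by the additivity relation $\exp(U_\bot(X+X')U^+)=\exp(U_\bot X U^+)\exp(U_\bot X' U^+)$ (this follows from \eqref{exp} and the commutativity in Proposition~\ref{algebra}, exactly as in \eqref{Lie1}), together with the analogous identity for $V$, the right side $\eta_Z(W)\odot_Z\eta_Z(W')$ equals the same triple. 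Hence $\star_Z$ is well-defined, associative, has $\theta_Z^{-1}(0,0,id_r)=UV^T$ as identity element, and has inverses given by $\theta_Z^{-1}(X,Y,G)^{-1}=\theta_Z^{-1}(-X,-Y,G^{-1})$; analyticity of multiplication and inversion on $\mathcal{U}_Z$ follows from analyticity of $\odot_Z$ on the product of Lie groups and analyticity of $\theta_Z$ and $\theta_Z^{-1}$ (Theorem on the atlas $\mathcal{B}_{n,m,r}$).

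One subtlety to dispatch explicitly: the map $\theta_Z$ (hence $\eta_Z$ and the operation $\star_Z$) depends only on the pair $(\mathrm{col}_{n,r}(U),\mathrm{col}_{m,r}(V))$ and not on the specific $G$ in $Z=UGV^T$, as already noted after the definition of $\theta_Z$; so the statement is unambiguous and the identity element $UV^T$ is a genuine element of $\mathcal{U}_Z$ (it corresponds to $(0,0,id_r)$). I would also remark that one should check $\eta_Z$ is an analytic diffeomorphism, not merely a group homomorphism: but this is clear, since in the coordinates given by $\theta_Z$ on $\mathcal{U}_Z$ and by the exponential charts $X\mapsto\exp(U_\bot X U^+)$, $Y\mapsto\exp(V_\bot Y V^+)$ on $\mathcal{G}_U,\mathcal{G}_V$ (which are analytic diffeomorphisms by the Lemma preceding \eqref{GZ}), the map $\eta_Z$ is represented by the identity on $\mathbb{R}^{(n-r)\times r}\times\mathbb{R}^{(m-r)\times r}\times\mathrm{GL}_r$. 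I do not anticipate a real obstacle here; the only place requiring a little care is making the three separate identifications ($\mathcal{S}_U\cong\mathcal{G}_U$, $\mathcal{S}_V\cong\mathcal{G}_V$, and the $\mathrm{GL}_r$ factor) fit together coherently inside the single chart $\theta_Z$, but this is bookkeeping rather than a genuine difficulty, and it mirrors exactly the argument already given for $\mathcal{V}_Z$ in the previous section.
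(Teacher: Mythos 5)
Your proposal is correct and follows essentially the same route as the paper, which establishes the result precisely by identifying $\mathcal{U}_Z$ with the product Lie group $\mathcal{G}_U\times\mathcal{G}_V\times\mathrm{GL}_r$ equipped with $\odot_Z$ and transporting that structure through the chart (the paper presents this identification in the discussion preceding the theorem rather than in a formal proof environment). Your additional verifications — the additivity of the exponentials via Proposition~\ref{algebra}, the identity element $\theta_Z^{-1}(0,0,id_r)=UV^T$, and the independence of $\theta_Z$ from the factor $G$ — are all consistent with the paper's argument.
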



\end{document}